\documentclass[12pt]{amsart}

\usepackage{enumerate, amsmath, amsthm, amsfonts, amssymb, xy}
\usepackage[usenames, dvipsnames]{color}
\usepackage[margin=1in]{geometry}
\usepackage[bookmarks, colorlinks=true, linkcolor=blue, citecolor=blue, urlcolor=blue]{hyperref}
\usepackage{tabu}
\xyoption{all}
\usepackage{stmaryrd}


\definecolor{prpl}{rgb}{0.7, 0.0, 0.7}

\setcounter{tocdepth}{1}

\newtheorem{theorem}{Theorem}

\newtheorem{proposition}[theorem]{Proposition}
\newtheorem{lemma}[theorem]{Lemma}
\newtheorem{corollary}[theorem]{Corollary}

\theoremstyle{definition}
\newtheorem{rmk}[theorem]{Remark}
\newenvironment{remark}[1][]{\begin{rmk}[#1] \pushQED{\qed}}{\popQED \end{rmk}}
\newtheorem{eg}[theorem]{Example}

\newtheorem{defn}[theorem]{Definition}

\newtheorem{ques}[theorem]{Question}
\newenvironment{question}[1][]{\begin{ques}[#1]\pushQED{\qed}}{\popQED \end{ques}}


\newcommand{\et}{\mathrm{et}}

\newcommand{\bA}{\mathbf{A}}
\newcommand{\cA}{\mathcal{A}}

\newcommand{\cB}{\mathcal{B}}

\newcommand{\cC}{\mathcal{C}}

\newcommand{\cE}{\mathcal{E}}

\newcommand{\bF}{\mathbf{F}}

\newcommand{\cI}{\mathcal{I}}

\newcommand{\cL}{\mathcal{L}}

\newcommand{\cM}{\mathcal{M}}

\newcommand{\cO}{\mathcal{O}}

\newcommand{\bQ}{\mathbf{Q}}

\newcommand{\rR}{\mathrm{R}}

\newcommand{\cT}{\mathcal{T}}

\newcommand{\cU}{\mathcal{U}}

\newcommand{\bZ}{\mathbf{Z}}






\DeclareMathOperator{\End}{End}

\DeclareMathOperator{\Spec}{Spec}

\newcommand{\GL}{\mathbf{GL}}

\DeclareMathOperator{\Hom}{Hom}

\DeclareMathOperator{\tr}{tr}
\DeclareMathOperator{\Res}{Res}

\let\wt\widetilde
\let\wh\widehat

\let\ol\overline

\newcommand{\lw}{{\textstyle \bigwedge}}

\title[{Constructing elliptic curves from Galois representations}]{Constructing elliptic curves from\\Galois representations}

\date{\today}

\author{Andrew Snowden}
\address{Department of Mathematics, University of Michigan, Ann Arbor, MI}
\email{\href{mailto:asnowden@umich.edu}{asnowden@umich.edu}}
\urladdr{\url{http://www-personal.umich.edu/~asnowden/}}
\thanks{AS was supported by NSF grants DMS-1303082 and DMS-1453893 and a Sloan Fellowship.}

\author{Jacob Tsimerman}
\address{Department of Mathematics, University of Toronto, Toronto, CA}
\email{\href{mailto:jacobt@math.toronto.edu}{jacobt@math.toronto.edu}}
\urladdr{\url{http://www.math.toronto.edu/~jacobt/}}

\begin{document}

\begin{abstract}
Given a non-isotrivial elliptic curve over an arithmetic surface, one obtains a lisse $\ell$-adic sheaf of rank two over the surface. This lisse sheaf has a number of straightforward
properties: cyclotomic determinant, finite ramification, rational traces of Frobenius, and somewhere not potentially good reduction. We prove that any lisse sheaf of rank two possessing these properties comes from an elliptic curve.
\end{abstract}
\maketitle

\section{Introduction}

Let $C/K$ be a proper, smooth geometrically irreducible curve, with $K$ a number field, let $f \colon E \to U\subset C$ be a non-isotrivial family of elliptic curves over a non-empty open subset $U$ of $C$, and let $L=\rR^1 f_*(\ol{\bQ}_{\ell})$ be the associated rank two lisse $\ell$-adic sheaf on $U$. The following properties hold:
\begin{enumerate}
\item There is an isomorphism $\lw^2 L \cong \ol{\bQ}_{\ell}(1)$.
\item There exists a proper smooth model $\cC$ of $C$ over $\Spec\cO_K[1/N]$, an open subset $\cU$ of $\cC$ extending $U$, and a lisse sheaf $\cL$ on $\cU$ extending $L$.
\item For every closed point $x$ of $\cU$, the trace of Frobenius on $\cL_x$ is a rational number.
\item There exists a point $x$ of $C_{\ol{K}}$ at which $L_{\ol{K}}$ does not have potentially good reduction, i.e., the inertia subgroup at $x$ does not act through a finite order quotient on the representation of $\pi_1^{\et}(U_{\ol{K}})$ associated to $L_{\ol{K}}$.
\end{enumerate}
To see (d), take $x$ to be a pole of the $j$-invariant of $E$.

The purpose of this paper is to prove the following converse of the above statement:

\begin{theorem}
\label{mainthm}
Let $C/K$ be as above, and let $L$ be an irreducible rank two lisse $\ol{\bQ}_{\ell}$-sheaf over an open subset $U\subset C$. Assume conditions (a)--(d) above hold. Then there exsits a family of elliptic curves $f \colon E \to U$ and an isomorphism $L \cong \rR^1 f_*(\ol{\bQ}_{\ell})$.
\end{theorem}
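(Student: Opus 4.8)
One natural approach is to read hypotheses (a)--(d) as the $\ell$-adic avatars of exactly the four ingredients one needs to produce an elliptic curve over $F=K(C)$ and spread it out over $\cU$: (a) is a cyclotomic determinant, (b) is finite ramification over a model, (c) is rational --- hence $\bQ$-rational --- Frobenius traces, and (d) is semistable (Steinberg) behaviour at some place. Accordingly the plan is: first pin down the precise local and global shape of $L$; then reduce modulo a good prime $\fp$ and, over the resulting function field of a curve over $\bF_{q_0}$, use the Langlands correspondence for $\GL_2$ to build an elliptic curve in characteristic $p$ for each such $\fp$; and finally --- the crux --- glue and algebraize these characteristic-$p$ families into a single family over $\cU$.

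\emph{Shape of $L$.} By Grothendieck's local monodromy theorem the inertia group at every point of $C$ acts quasi-unipotently on $L$, and by (d) it acts through an infinite quotient at some point $x$; since $L$ has rank two, the local monodromy at $x$ is, up to the quadratic twist by the character ramified at $x$, a single non-trivial unipotent Jordan block --- precisely the local picture of an elliptic curve with multiplicative reduction. Since $\lw^2 L \cong \ol{\bQ}_\ell(1)$ is geometrically trivial, the geometric monodromy lies in $\SL_2$; containing an infinite-order unipotent, so that (by irreducibility of $L$) it acts irreducibly, it is Zariski-dense in $\SL_2$. Next, using (b), extend $L$ to the lisse sheaf $\cL$ on $\cU$ over $\cO_K[1/N]$. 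At a closed point $u$ of $\cU$ with residue field $\bF_q$ the characteristic polynomial of $\Frob_u$ is $T^2-a_uT+q$ with $a_u\in\bQ$ by (a) and (c), and $a_u$ is an algebraic integer by the existence of a Galois-stable lattice, so $a_u\in\bZ$; reducing $\cL$ modulo a prime $\fp\nmid N\ell$ gives a lisse rank-two sheaf $\cL_\fp$ on the smooth curve $\cC_\fp/\bF_{q_0}$ which stays geometrically irreducible for all but finitely many $\fp$ (its geometric monodromy being unchanged under reduction), hence is pure of weight one by Weil~II, so $|a_u|\le 2\sqrt q$. Thus at every closed point the Frobenius data is literally that of an elliptic curve over the residue field.

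\emph{Construction in characteristic $p$.} Fix $\fp\nmid N\ell$ with $\cL_\fp$ irreducible. On the function field $F_\fp=\bF_{q_0}(\cC_\fp)$ the representation $\cL_\fp$ is irreducible of rank two with cyclotomic determinant, bounded ramification, integral Frobenius traces obeying the Weil bound, and Steinberg local behaviour at (the specialization of) $x$ coming from the previous step. By the Langlands correspondence for $\GL_2$ over function fields it matches a cuspidal automorphic representation $\pi_\fp$ of $\GL_2(\bA_{F_\fp})$ with the same invariants; a cuspidal $\GL_2$-automorphic representation over a function field with rational Hecke eigenvalues and a Steinberg place is attached to an elliptic curve $f\colon\cE_\fp\to\cU_\fp$ (realized, say, as an isogeny factor of the Jacobian of a Drinfeld modular curve), and comparing Frobenius traces via Chebotarev gives $\rR^1 f_*\ol{\bQ}_\ell\cong\cL_\fp$. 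In particular one obtains the $j$-function $j_\fp\in F_\fp$, whose polar divisor is supported on the fixed horizontal bad locus $D\subset\cC$ with order bounded uniformly in $\fp$ by the bounded ramification of $L$.

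\emph{Gluing --- the main obstacle.} It remains to assemble the $\cE_\fp$ into one family $\cE\to\cU$ over some $\cO_K[1/N']$. Since an isomorphism of the polarized sheaves $\rR^1 f_*\ol{\bQ}_\ell$ of two families of elliptic curves forces the families themselves to be isomorphic (Torelli in genus one), such a family is unique if it exists, so the real task is to produce a single $j\in H^0(\cC,\cO(mD))$ --- an element of a fixed finite-dimensional space, by the uniform pole bound --- reducing to $j_\fp$ modulo infinitely many $\fp$. I expect this interpolation to be the heart of the matter: one wants to see the characteristic-zero $j$ directly, e.g.\ by building $\cE$ over the $\fp$-adic formal (or rigid-analytic) neighbourhood of $\cC_\fp$ --- deforming the special fibre via Serre--Tate/crystalline theory with the $\ell$-adic sheaf $\cL$ itself as the horizontal datum pinning down the lift --- and then algebraizing, the finite-dimensionality of the space of candidate $j$'s being what makes algebraization possible. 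Here hypothesis (d) is essential: the Steinberg place both makes the function-field construction work and rigidifies the deformation, keeping the $j_\fp$ from drifting as $\fp$ varies and forcing non-isotriviality. Once $\cE\to\cU$ exists, $\rR^1 f_*\ol{\bQ}_\ell\cong L$ follows from the characteristic-$p$ comparisons together with the Chebotarev density theorem, since $L$, being irreducible, is determined by its Frobenius traces. (A variant replaces the reduction-mod-$p$ step by complex Hodge theory --- realizing $L^{\mathrm{an}}$ over each archimedean place as a polarized variation coming from a modular embedding, in the spirit of Corlette--Simpson --- and then uses (b),(c) to descend the resulting family from $\bC$ to $K$; the algebraization/descent difficulty is of the same nature.)
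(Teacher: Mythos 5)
Your overall architecture matches the paper's: pin down the local/global shape of $L$, reduce modulo primes and invoke Drinfeld's global Langlands for $\GL_2$ over function fields to produce elliptic curves in characteristic $p$, then recognize that the hard part is promoting this to a single family over $\cU$ in characteristic zero. The divergence — and the genuine gap in your proposal — is in how that last step is carried out.

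You propose to interpolate the $j$-invariants $j_\fp$ directly, via Serre--Tate/crystalline deformation of the special fibre and then algebraization, appealing to the finite-dimensionality of $H^0(\cC,\cO(mD))$. This is not really a proof sketch: it is unclear what crystalline/horizontal structure the purely $\ell$-adic sheaf $\cL$ supplies at $\fp$ to pin down a unique $p$-adic lift, and even granting a formal or rigid lift, the algebraization step is not straightforward. The paper sidesteps all of this with a much cleaner algebro-geometric device. It first passes to a finite cover of $C$ so that $\cL/\ell^3\cL$ is trivial (this requires a nontrivial descent result, Proposition~\ref{extn}, which lets you go back down afterward — a step you omit). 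It then constructs a finite-type $\cO_K[1/N]$-scheme $\cM$ parametrizing degree-$\le D$ maps $\cU\to Y(\ell^3)$, together with the closed subschemes $\cM_n$ where $f^*\cT_n\cong\cL_n$, and observes that the stabilized intersection $\cM_\infty$ has a point over every closed point of $S$ by the Drinfeld step. A nonempty finite-type scheme over $\cO_K[1/N]$ whose every closed fibre is nonempty must have nonempty generic fibre; this instantly produces an elliptic curve over $U_{\ol{K}}$ realizing $L_{\ol K}$, and a short representation-theoretic argument (Lemma~\ref{lemma:rep}) descends it to a finite extension of $K$. No deformation theory, no crystalline comparison, no interpolation of $j$ — just spreading out and Noetherianity.

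Two further points your sketch glosses over. First, Drinfeld's theorem produces a $\GL_2(E)$-type abelian variety for some number field $E$, not an elliptic curve; one must use hypotheses (c) and (d) — rational Frobenius traces and a point of bad reduction — to force $E=\bQ$ and cut out an elliptic curve (Proposition~\ref{dcoef}); your phrase ``rational Hecke eigenvalues and a Steinberg place'' gestures at this but does not supply the argument. Second, the map $\cU_s\to Y$ furnished by Drinfeld need not a priori have bounded degree: one must factor it as a separable map composed with a power of Frobenius and use $F^*\cT\cong\cT$ to reduce to the separable part, whose degree is then controlled by the Riemann--Hurwitz bound built into the choice of $D$. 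Without this, the finiteness of the moduli space is useless.
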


\begin{remark}
The Fontaine--Mazur conjecture predicts that representations of $G_K$ satisfying certain natural conditions should appear in the \'etale cohomology of algebraic varieties. It seems reasonable to expect some kind of generalization of this conjecture to higher dimensional bases. Our theorem can be viewed as confirmation of a very simple case of this.
\end{remark}

\begin{remark}
One can prove a version of Theroem~\ref{mainthm} where $C$ is replaced with a higher dimensional variety. We just treat the case of curves to keep the exposition simpler.
\end{remark}

\begin{remark}
The theorem is not true if one assumes only (a)--(c). Recall that a {\bf fake elliptic curve} is an abelian surface $A$ such that $\End(A)$ is an order $R$ in a non-split quaternion algebra that is split at infinity. The moduli space of fake elliptic curves corresponding to $R$ is a proper curve. We therefore can construct a family $f \colon A \to C$ of fake elliptic curves for some $C$ as above. The sheaf $\rR^1 f_* \ol{\bQ}_{\ell}$ decomposes as $L^{\oplus 2}$ for some rank two lisse sheaf $L$. This $L$ satisfies (a)--(c) but not (d), and thus does not come from a family of elliptic curves.
\end{remark}

\begin{question}
Suppose that for each prime number $\ell$ we have an irreducible rank two $\bQ_{\ell}$ sheaf $L_{\ell}$ satisfying (a)--(c) such that $\{L_{\ell}\}$ forms a compatible system (meaning that the $\cU$ in part (b) can be chosen uniformly and that the traces of Frobenius elements are independent of $\ell$). Does the system come from a family of elliptic curves? Note that the fake elliptic curve counterexample does not apply here: if $\ell$ ramifies in $R \otimes \bQ$ then $\rR^1 f_* \bQ_{\ell}$ does not decompose as $L^{\oplus 2}$.
\end{question}

\subsection{Summary of the proof}

The basic idea is to use Drinfeld's results on global Langlands to construct an elliptic curve over $\cC_{\bF_v}$ for most places $v$ of $\cO_K$, and then piece these together to get one over $\cC$. More precisely, we proceed as follows:
\begin{itemize}
\item We first show that we are free to pass to finite covers of $C$. The main content here is a descent result that shows that if $L$ comes from an elliptic curve over a cover of $C$ then it comes from an elliptic curve over $C$. Using this, we replace $C$ with a cover so that $L/\ell^3 L$ is trivial (after replacing $L$ with an integral form).
\item We next consider $L$ over $\cC_{\bF_v}$ and use Drinfeld's results on global Langlands to produce a $\GL_2$-type abelian variety $A_v$ realizing $L$.
\item Using hypotheses (c) and (d), we desdend the coefficient field of $A_v$ to $\bQ$, obtaining an elliptic curve $E_v$. (It is likely this could be obtained directly from Drinfeld's proof).
\item We next consider a certain moduli space $\cM$ of maps $\cU \to Y(\ell^3)$. From the previous step (and the triviality of $L/\ell^3 L$), we see that $\cM$ has $\bF_v$-points for infinitely many $v$. Since $\cM$ is of finite type over $\cO_K$, it therefore has a $\ol{K}$-point. This yields an elliptic curve $E_{\ol{K}}$ over $U_{\ol{K}}$ realizing $L_{\ol{K}}$.
\item Our hypotheses imply that $L_{\ol{K}}$ is irreducible. A simple representation theory argument thus shows that there is a finite extension $K'/K$ such that $E$ descends to $U_{K'}$ and its Tate module agrees with $L_{K'}$. We have already shown that it suffices to prove the result over a finite cover of $C$, so we are now finished.
\end{itemize}

\subsection{Outline}

In \S \ref{s:bg} we recall the relevant background material. In \S \ref{s:descent}, we prove a few descent results for abelian varieties. In \S \ref{s:drinfeld}, we package Drinfeld's results on global Langlands into the form we need; in particular, we use the results of \S \ref{s:descent} to produce elliptic curves (as opposed to $\GL_2$-type abelian varieties). In \S \ref{s:mapping}, we construct a mapping space parametrizing maps between two affine curves. Finally, in \S \ref{s:proof}, we prove Theorem~\ref{mainthm}.

\section{Background} \label{s:bg}

\subsection{Abelian varieties}

Let $A$ be an abelian variety over a field $K$ such that $\End_K(A) \otimes \bQ$ contains a number field $F$. Let $V_{\ell}(A)$ denote the rational Tate module of $A$ at the rational prime $\ell$. This is a module over $F \otimes \bQ_{\ell}=\prod_{w \mid \ell} F_w$, and thus decomposes as $\bigoplus_{w \mid \ell} V_w(A)$ where each $V_w(A)$ is a continuous representation of $G_K$ over the field $F_w$. We recall the following standard results:

\begin{proposition} \label{compsys}
Let $\sigma \in \End_K(A)$ commute with $F$. Then the characteristic polynomial of $\sigma$ on $V_w(A)$ (regarded as an $F_w$-vector space) has coefficients in $F$ and and is independent of $w$. In particular, each $V_w(A)$ has the same dimension over $F_w$.
\end{proposition}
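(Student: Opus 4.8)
The plan is to reduce the statement to the classical fact that, after extending scalars to a common algebraically closed field, all the $\ell$-adic realizations of an abelian variety with an action of $\sigma$ carry the same characteristic polynomial data, and then to descend this to $F$. First I would pass to $\ol{\bQ}_\ell$, i.e. fix an embedding $F_w \hookrightarrow \ol{\bQ}_\ell$ for each $w \mid \ell$ and consider $V_w(A) \otimes_{F_w} \ol{\bQ}_\ell$; the characteristic polynomial of $\sigma$ is unchanged by this extension. Now $V_\ell(A) \otimes_{\bQ_\ell} \ol{\bQ}_\ell = \bigoplus_{w \mid \ell} (V_w(A) \otimes_{F_w} \ol{\bQ}_\ell)$, so the characteristic polynomial of $\sigma$ on $V_\ell(A)$ (as a $\bQ_\ell$-vector space) is the product over $w \mid \ell$ of the characteristic polynomials on the $V_w(A)$.

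Next I would invoke the standard comparison between the $\ell$-adic characteristic polynomial of an endomorphism and a purely algebraic invariant: for $\sigma \in \End_K(A)$, the characteristic polynomial of $\sigma$ acting on $V_\ell(A)$ equals $P_\sigma(T)^{?}$ where $P_\sigma$ is the reduced characteristic polynomial coming from the action of $\sigma$ on $\Lie(A)$ together with its conjugate — concretely, it has $\bQ$-coefficients and is independent of $\ell$ (this is in Mumford's book, or Shimura, for the action on the Tate module). Using the $F$-module structure: since $\sigma$ commutes with $F$, it acts $F \otimes \bQ_\ell$-linearly on $V_\ell(A)$, hence $F_w$-linearly on each summand $V_w(A)$; let $Q_w(T) \in F_w[T]$ be the characteristic polynomial there. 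Comparing with the $\bQ_\ell$-linear characteristic polynomial gives $\prod_{w \mid \ell} N_{F_w/\bQ_\ell}(Q_w(T)) = $ (the $\bQ$-rational, $\ell$-independent polynomial above), but more usefully one argues directly that the $F_w$-characteristic polynomial of $\sigma$ is computed from the action of $\sigma$ on the $F \otimes \ol{K}$-module $\Lie(A \otimes \ol{K})$ together with the complex-conjugate piece, and this computation visibly takes place over $F$ and does not see $w$ or $\ell$. Thus each $Q_w(T)$ is the image in $F_w[T]$ of one fixed polynomial $Q(T) \in F[T]$.

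The cleanest way to organize this, and the step I would be most careful about, is the passage from ``$\ell$-independent with $\bQ$-coefficients'' for the $\bQ_\ell$-characteristic polynomial to ``$w$-independent with $F$-coefficients'' for the $F_w$-characteristic polynomial: a priori one only controls the norm down to $\bQ_\ell$. I would handle this by varying $\ell$: for a positive-density set of primes $\ell$, $F$ is split at $\ell$, so all $F_w = \bQ_\ell$ and $Q_w(T) \in \bQ_\ell[T]$; matching these across two split primes $\ell, \ell'$ via a number field into which $F$ embeds forces the coefficients of $Q_w$ to lie in (a fixed copy of) $F$ and to be independent of the place. Alternatively — and this is probably the slickest route — one notes that $F[\sigma] \subset \End_K(A)\otimes\bQ$ is a commutative subring, $V_\ell(A)$ is a module over $F[\sigma]\otimes\bQ_\ell$, and the characteristic polynomial of $\sigma$ over $F_w$ is determined by the trace form, i.e. by the functions $w \mapsto \tr_{F_w}(\sigma^n \mid V_w(A))$; by the analogous standard statement for traces (the $\ell$-adic trace of an endomorphism on $V_\ell(A)$ is a rational number independent of $\ell$, applied to the elements of $F[\sigma]$) one gets that $\tr_{F_w}(\sigma^n \mid V_w(A)) \in F$ independently of $w$, and Newton's identities then give the characteristic polynomial with $F$-coefficients independent of $w$. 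The final clause — equality of dimensions — is then immediate: the degree of $Q(T)$ is $\dim_{F_w} V_w(A)$, which is therefore the same for all $w$.
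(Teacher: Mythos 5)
The paper gives no argument for this proposition: the ``proof'' is a citation to Shimura \cite{shimura} \S 11.10 and, for $F=\bQ$, to \cite[Prop.~9.23]{milne}. So there is nothing in the text to compare your proposal against step by step; what the references prove is essentially your final paragraph. That ``slickest route'' is correct and is the one to keep: for $\lambda\in F$ and $n\ge 0$ the element $\lambda\sigma^n$ lies in $\End_K(A)\otimes\bQ$, so its $\bQ_\ell$-trace on $V_\ell(A)$ is rational and $\ell$-independent (the classical characteristic-polynomial-via-degree fact, which works over any field); this trace equals $\mathrm{Tr}_{(F\otimes\bQ_\ell)/\bQ_\ell}(\lambda\cdot t_n)$ where $t_n=(\tr_{F_w}(\sigma^n\mid V_w(A)))_{w\mid\ell}\in F\otimes\bQ_\ell$, and non-degeneracy of $\mathrm{Tr}_{F/\bQ}$ forces $t_n$ to lie in $F\subset F\otimes\bQ_\ell$, independently of $\ell$. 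Newton's identities then give the characteristic polynomial with $F$-coefficients, and $t_0=\dim_{F_w}V_w(A)$ gives the dimension clause. You glossed over the duality step from ``$\bQ$-valued $\bQ_\ell$-traces for all $\lambda\in F$'' to ``$t_n\in F$''; it is elementary (non-degeneracy of the trace form identifies $F$ with the set of $x\in F\otimes\bQ_\ell$ with $\mathrm{Tr}(\lambda x)\in\bQ$ for all $\lambda\in F$), but it is exactly the crux and deserves to be stated.

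The other two routes you float are weaker and should be dropped. The remark about computing the characteristic polynomial from $\mathrm{Lie}(A\otimes\ol K)$ ``together with its conjugate'' is Hodge-theoretic and only literal over $\bC$; here $K$ is merely finitely generated, so one would need an extra spreading-out/specialization step, or should simply use the degree identity instead. And ``match across two split primes'' does not by itself determine the individual $Q_w$: at a split prime you know only the product $\prod_{w\mid\ell}Q_w$ equals the fixed $\bQ$-rational polynomial, which cannot distinguish the factors without further input. The missing input is precisely the $F$-linearity that the trace pairing exploits, which is why the trace route succeeds where the split-prime comparison stalls.
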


\begin{proof}
See \cite[\S 11.10]{shimura} and (for $F=\bQ$) \cite[Proposition~9.23]{milne}.
\end{proof}

\begin{proposition} \label{Ffaltings}
Assume $K$ is a number field and $\End_K(A) \otimes \bQ=F$. Let $w$ be a place of $F$ above a prime $p$. Then $\End_{\bQ_p[G_K]}(V_w(A))=F_w$. In particular, $V_w(A)$ is absolutely irreducible as a representation of $G_K$ over $F_w$.
\end{proposition}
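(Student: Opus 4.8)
The plan is to reduce the statement to Faltings' two fundamental theorems on the Tate module of an abelian variety over a number field: first, that $V_p(A)$ is a semisimple $\bQ_p[G_K]$-module; and second, that the natural map $\End_K(A) \otimes \bQ_p \to \End_{\bQ_p[G_K]}(V_p(A))$ is an isomorphism. Granting these, the proposition becomes essentially formal linear algebra.

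Using the hypothesis $\End_K(A) \otimes \bQ = F$, Faltings' isomorphism reads $F \otimes_{\bQ} \bQ_p \xrightarrow{\sim} \End_{\bQ_p[G_K]}(V_p(A))$. I would then decompose $F \otimes_{\bQ} \bQ_p = \prod_{w \mid p} F_w$ and let $e_w \in F \otimes_{\bQ} \bQ_p$ be the associated idempotents; since these lie in the (necessarily commutative) image of $\End_K(A)$, they commute with $G_K$, so $V_p(A) = \bigoplus_{w \mid p} V_w(A)$ is a decomposition of $\bQ_p[G_K]$-modules with $V_w(A) = e_w V_p(A)$. Passing to the corners of the endomorphism ring,
\[
\End_{\bQ_p[G_K]}(V_w(A)) = e_w \, \End_{\bQ_p[G_K]}(V_p(A)) \, e_w = e_w (F \otimes_{\bQ} \bQ_p) e_w = F_w,
\]
which is the first assertion. (As a byproduct one also gets $\Hom_{\bQ_p[G_K]}(V_w(A), V_{w'}(A)) = 0$ for $w \neq w'$, though this is not needed here.)

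For the ``in particular'', note that $V_w(A)$ is a direct summand of the semisimple module $V_p(A)$, hence is semisimple over $F_w[G_K]$; since $\End_{F_w[G_K]}(V_w(A)) = F_w$ is a field, it contains no idempotents other than $0$ and $1$, and therefore $V_w(A)$ is irreducible over $F_w$. Finally, $V_w(A)$ is finite-dimensional over $F_w$ with endomorphism algebra exactly $F_w$, so the Jacobson density theorem shows that the image of $F_w[G_K]$ in $\End_{F_w}(V_w(A))$ is the whole endomorphism algebra; this surjectivity is preserved under extension of scalars to $\ol{F_w}$, so $V_w(A) \otimes_{F_w} \ol{F_w}$ remains irreducible, i.e. $V_w(A)$ is absolutely irreducible. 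The only genuine input is Faltings' theorem, used as a black box; everything else is bookkeeping with the semisimple ring $F \otimes_{\bQ} \bQ_p$ together with the density theorem, so I do not anticipate any substantial obstacle.
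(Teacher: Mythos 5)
Your proof is correct and follows essentially the same route as the paper: invoke Faltings' theorem $\End_K(A)\otimes\bQ_p \xrightarrow{\sim} \End_{\bQ_p[G_K]}(V_p(A))$ and then do formal algebra with $F\otimes\bQ_p=\prod_{w\mid p}F_w$. The paper gets the containment $\bigoplus_w \End_{\bQ_p[G_K]}(V_w(A))\subset F\otimes\bQ_p$ to be an equality by a dimension count rather than your cleaner idempotent/corner argument, and leaves the ``in particular'' unproved where you correctly supply the needed ingredients (semisimplicity of $V_w(A)$ as a summand of the Faltings-semisimple $V_p(A)$, plus Jacobson density for absolute irreducibility).
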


\begin{proof}
We have
\begin{displaymath}
F \otimes \bQ_p = \End_{\bQ_p[G_K]}(V_p(A))=\End_{\bQ_p[G_K]}(\bigoplus_{w \mid p} V_w(A)) \supset \bigoplus_{w \mid p} \End_{\bQ_p[G_K]}(V_w(A)) \supset \bigoplus_{w \mid p} F_w,
\end{displaymath}
where the first equality is the Tate conjecture proved by Faltings \cite[Theorem 1, page 211]{faltings}. Since the endmost spaces have the same dimension, we conclude that the containments are equalities, and so $\End_{\bQ_p[G_K]}(V_w(A))=F_w$.
\end{proof}

\subsection{Arithmetic fundamental groups}

Let $X$ be an affine normal integral scheme of finite type over $\bZ$ and consider $\pi_1^{\rm et}(X)$. For each closed point $x$ of $X$ there is a conjugacy class of Frobenius elements $F_x$. We recall the following generalization of the Chebotarev density theorem:

\begin{proposition} \label{chebo}
The elements $\{F_x\}_{x \in X}$ are dense in $\pi_1^{\rm et}(X)$.
\end{proposition}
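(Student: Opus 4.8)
The plan is to reduce to the classical Chebotarev density theorem for number fields by slicing $X$ into curves and using the known one-variable case, together with the structure of the étale fundamental group as a profinite group. First I would fix a finite quotient $G = \pi_1^{\et}(X)/N$ corresponding to a finite connected Galois étale cover $Y \to X$; it suffices to show that every conjugacy class of $G$ is hit by a Frobenius $F_x$, since a closed subset of the profinite group $\pi_1^{\et}(X)$ that meets every open subgroup's coset structure nontrivially in this way must be all of $\pi_1^{\et}(X)$ (the Frobenius classes are closed under the profinite topology, and density is equivalent to surjecting onto every finite quotient, class by class). So the real content is: for each $g \in G$ there is a closed point $x \in X$ with $F_x$ conjugate to $g$.

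Next I would invoke the absolute case. The scheme $X$ is of finite type over $\bZ$, so its generic point lies over a single residue characteristic or over $\bQ$; in either case there is a prime $p$ (varying) such that the fiber $X_{\bF_p}$ or an arithmetic slice is nonempty, and by Noether normalization plus a Bertini-type argument one can find a closed point $x_0$ and pass through it a one-dimensional subscheme — either a curve over a finite field or a one-dimensional arithmetic scheme (spectrum of an order in a number field) — that is still normal and whose fundamental group surjects onto a large enough piece of $G$. The key input is then the classical Chebotarev theorem in its two incarnations: for $\Spec \cO_{K'}[1/M]$ with $K'$ a number field (the usual statement), and for a smooth affine curve over a finite field (Lang's theorem / the function-field Chebotarev, which follows from the Weil bounds applied to the cover). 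Combining these over a one-dimensional subscheme hitting a chosen element of $G$ gives the desired closed point $x$ of $X$ with $F_x \sim g$.

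The main obstacle I anticipate is the reduction step: ensuring that one can find a one-dimensional subscheme of $X$ through a prescribed closed point whose étale fundamental group still surjects onto the finite quotient $G$ in a way that detects the chosen conjugacy class $g$. Plain Bertini over a finite field is delicate (smooth hyperplane sections need not exist), so I would instead work with the cover $Y \to X$ directly: realize $g$ as $\Frob$ at some closed point of $Y$ by first producing \emph{any} closed point $y$ of $Y$ whose residue field has the right degree — here one uses that $Y$ is of finite type over $\bZ$ of positive dimension, hence has closed points of infinitely many residue characteristics and, within a fixed characteristic, points realizing every sufficiently large residue degree — and then tracking the decomposition group. One must be slightly careful that the ``$[1/N]$'' inverted primes and the ramification of the cover do not obstruct the relevant residue characteristics; this is handled by choosing $p$ outside a finite bad set, which is harmless since infinitely many $p$ remain. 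Once the finite-quotient statement is in hand, passing to the profinite limit is formal: the set of Frobenius elements is closed, and its image in every finite quotient is all conjugacy classes, hence everything, so it is dense.
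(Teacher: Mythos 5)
The paper's proof is a one-line citation of Serre's Theorem~7 in \cite{serre}, which \emph{is} the Chebotarev density theorem for schemes of finite type over $\bZ$ (in fact in its quantitative, equidistributed form). Serre proves it analytically, via the nonvanishing and pole behavior of the Artin $L$-functions of the finite Galois cover, with no geometric reduction to curves.

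Your opening reduction is fine and standard: density is equivalent to showing, for each finite connected Galois cover $Y \to X$ with group $G$ and each $g \in G$, that some Frobenius $F_x$ is conjugate to $g$, since the set of Frobenius elements is conjugation-stable and its closure in a profinite group is determined by its images in finite quotients. The gap is in how you hit a prescribed class. You are right to worry that Bertini over finite fields (a smooth or even irreducible one-dimensional slice through a chosen point whose restricted cover stays connected) is genuinely delicate; but the workaround you substitute does not close the hole. Producing a closed point $y$ of $Y$ whose residue degree over its image $x \in X$ equals the order of $g$ only tells you that $F_x$ generates \emph{some} cyclic subgroup of that order; it in no way forces $F_x$ to lie in the conjugacy class of $g$. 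If $G$ has two non-conjugate elements of the same order, your construction cannot distinguish them, and ``tracking the decomposition group'' is not an argument — it is a restatement of what must be shown. The easy part (that $Y$ has closed points of all large residue degrees in almost all characteristics) is not where the difficulty lies; the difficulty is controlling which conjugacy class the Frobenius lands in, and that is precisely the content of Chebotarev. To make a slicing argument rigorous one must actually carry out the Bertini/Lefschetz step — e.g.\ via Poonen's Bertini over finite fields, or by passing to the generic fiber when it is nonempty, and in either case verifying that $\pi_1^{\et}$ of the slice still surjects onto $G$ — or else fall back on Serre's $L$-function proof as the paper does.
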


\begin{proof}
This follows from \cite[Theorem~7]{serre}.
\end{proof}

\begin{corollary} \label{cheborep}
Suppose that $\rho_1$ and $\rho_2$ are semi-simple continuous representations $\pi_1^{\rm et}(X) \to \GL_n(\ol{\bQ}_{\ell})$ such that $\tr(\rho_1(F_x))=\tr(\rho_2(F_x))$ for all $x$. Then $\rho_1$ and $\rho_2$ are equivalent.
\end{corollary}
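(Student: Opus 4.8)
The plan is to deduce this purely formally from Proposition~\ref{chebo} together with the classical fact that, in characteristic zero, a semisimple representation is determined up to isomorphism by its character.

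First I would arrange for $\rho_1$ and $\rho_2$ to take values in $\GL_n(E)$ for a single finite extension $E/\bQ_\ell$. Since $\pi_1^{\et}(X)$ is profinite, hence compact, the image of each $\rho_i$ is a compact subgroup of $\GL_n(\ol{\bQ}_\ell)$ and therefore lies in $\GL_n(E_i)$ for some finite $E_i/\bQ_\ell$; taking $E$ to be a common overfield of $E_1$ and $E_2$ does the job. (In the $\ell$-adic sheaf setting this is essentially built into the definitions, as lisse $\ol{\bQ}_\ell$-sheaves arise from $E$-sheaves for finite $E$.)

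Next, consider the character $\chi_i \colon \pi_1^{\et}(X) \to E$, $g \mapsto \tr(\rho_i(g))$. This is a continuous function, and by hypothesis $\chi_1(F_x) = \chi_2(F_x)$ for every closed point $x$. By Proposition~\ref{chebo} the Frobenius elements $\{F_x\}$ are dense in $\pi_1^{\et}(X)$, and $E$ is Hausdorff, so two continuous $E$-valued functions agreeing on a dense set agree everywhere. Hence $\chi_1 = \chi_2$ as functions on $\pi_1^{\et}(X)$.

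Finally I would invoke the Brauer--Nesbitt theorem: over a field of characteristic zero the characters of the pairwise non-isomorphic irreducible representations are linearly independent, so two semisimple representations with equal characters have the same irreducible constituents with the same multiplicities and are therefore isomorphic. Applying this to $\rho_1$ and $\rho_2$, regarded as representations of the abstract group $\pi_1^{\et}(X)$ over $E$ (equivalently over $\ol{\bQ}_\ell$), yields $\rho_1 \cong \rho_2$. I do not expect a genuine obstacle here: the only points that need a word of care are the reduction to a finite coefficient field and the continuity of the trace, which is what makes the density argument legitimate, while the representation-theoretic input is entirely standard.
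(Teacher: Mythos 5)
Your argument is correct and is exactly the standard one: reduce to a finite coefficient field by compactness of the profinite group, use continuity of the trace together with the density of Frobenii (Proposition~\ref{chebo}) to conclude equality of characters, and then apply Brauer--Nesbitt in characteristic zero. The paper in fact gives no proof of this corollary at all---it is presented as an immediate consequence of the density statement---so your write-up simply supplies the routine details that the authors left implicit.
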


\subsection{Ramification of characters}

\begin{lemma} \label{charram}
Let $C$ be a curve over a number field $K$, and let $U$ be an open subset. Suppose that $\alpha \colon \pi_1(U) \to \ol{\bQ}_{\ell}^{\times}$ is a continuous homomorphism. Then for every point $x$ of $C_{\ol{K}}$, the inertia subgroup of $\pi_1(U_{\ol{K}})$ at $x$ has finite image under $\alpha$.
\end{lemma}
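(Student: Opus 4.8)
The plan is to reduce to a local statement at $x$ and exploit two facts: that $\alpha$, taking values in the abelian group $\ol{\bQ}_{\ell}^{\times}$, is invariant under conjugation in $\pi_1(U)$; and that the inertia at a puncture is a Tate module $\wh{\bZ}(1)$, on which the arithmetic Galois group acts through the cyclotomic character. First, if $x$ lies in $U_{\ol{K}}$ then its inertia subgroup is trivial and there is nothing to prove, so we may assume $x \in C_{\ol{K}} \setminus U_{\ol{K}}$. Next, after replacing $K$ by a finite extension and $C,U$ by their base changes — harmless, since $\ol{K}$, the group $\pi_1(U_{\ol{K}})$, and the inertia subgroup at $x$ are all unchanged, and $\alpha$ restricts to a continuous character of $\pi_1(U_{K'})$ — we may assume that $x$ is a $K$-rational point of $C \setminus U$.

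Second, I would build the decomposition group at $x$. Since $x$ is a smooth $K$-point, $\wh{\cO}_{C,x} \cong K[[t]]$, and $\Spec K((t)) \to C$ hits the generic point of $C$, which lies in $U$; thus $\Spec K((t)) \to U$ induces, well-defined up to conjugacy, a homomorphism $\pi_1(\Spec K((t))) \to \pi_1(U)$, whose image I call $D_x$. There is no wild inertia in characteristic $0$, and $\pi_1(\Spec \ol{K}((t)))$ is canonically $\wh{\bZ}(1) = \varprojlim_n \mu_n(\ol{K})$, with $\sigma$ corresponding to $(\sigma(t^{1/n})/t^{1/n})_n$. Hence $D_x$ sits in an extension $1 \to I_x \to D_x \to G_K \to 1$ in which $I_x \cong \wh{\bZ}(1)$ and the conjugation action of $G_K$ on $I_x$ is the cyclotomic character $\chi \colon G_K \to \wh{\bZ}^{\times}$: for $\gamma$ a topological generator of $I_x$ and $\tilde{g} \in D_x$ lifting $g \in G_K$, one has $\tilde{g}\gamma\tilde{g}^{-1} = \gamma^{\chi(g)}$. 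The inertia subgroup of $\pi_1(U_{\ol{K}})$ at $x$ is a conjugate of $I_x$, so it suffices to prove that $\alpha(I_x)$ is finite.

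Third, the conclusion. Because $\ol{\bQ}_{\ell}^{\times}$ is abelian, $\alpha(ghg^{-1}) = \alpha(h)$ for all $g,h \in \pi_1(U)$; with $h = \gamma$, $g = \tilde{g}$, and using that $\alpha|_{I_x}$ is a continuous homomorphism out of $\wh{\bZ}$, this gives $\alpha(\gamma) = \alpha(\gamma^{\chi(g)}) = \alpha(\gamma)^{\chi(g)}$, hence $\alpha(\gamma)^{\chi(g)-1} = 1$ for every $g \in G_K$. The image $\chi(G_K)$ is an open subgroup of $\wh{\bZ}^{\times}$, since $\Gal(K(\mu_\infty)/K)$ has finite index in $\Gal(\bQ(\mu_\infty)/\bQ) = \wh{\bZ}^{\times}$. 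Now an elementary argument in $\wh{\bZ}^{\times} = \prod_p \bZ_p^{\times}$ finishes it: any open subgroup contains $\prod_{p \notin S} \bZ_p^{\times}$ for a finite set $S$ of primes, so for each odd $p \notin S$ one may pick $c$ equal to $1$ away from $p$ with $c - 1$ a unit at $p$, which forces the pro-$p$ component of $\alpha(\gamma)$ to be trivial; a similar choice at the finitely many remaining primes shows the other components have bounded order. Thus $\alpha(\gamma)$ is a root of unity and $\alpha(I_x) = \overline{\langle \alpha(\gamma)\rangle}$ is finite.

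The only genuinely delicate point is the identification of $I_x$ with $\wh{\bZ}(1)$ as a $G_K$-module, i.e. that the conjugation action is exactly cyclotomic with no extra twist; this is standard — essentially the Kummer description of tame covers in characteristic $0$ — but it requires setting up the decomposition group from the local ring $K((t))$ and observing that the ambiguity up to conjugacy is immaterial here precisely because $\alpha$ is conjugation-invariant. The remaining ingredients, namely the openness of $\chi(G_K)$ and the passage from $\alpha(\gamma)^{\chi(g)-1} = 1$ for all $g$ to "$\alpha(\gamma)$ is a root of unity", are soft.
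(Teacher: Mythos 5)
Your proof is correct and follows essentially the same route as the paper: reduce to a $K$-rational point, describe the decomposition group as an extension of $G_K$ by geometric inertia $\wh{\bZ}(1)$ with conjugation action the cyclotomic character, and use that $\alpha$ is conjugation-invariant to get $\alpha(\gamma)=\alpha(\gamma)^{\chi(g)}$, concluding that $\alpha(\gamma)$ is a root of unity. The paper states this last inference without proof; you supply the (correct, if somewhat loosely phrased — ``pro-$p$ component of $\alpha(\gamma)$'' really means the $p$-part of the image of $\alpha\vert_{I_x}$) elementary argument using openness of $\chi(G_K)$ in $\wh{\bZ}^{\times}$, which is a reasonable addition.
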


\begin{proof}
We are free to replace $K$ with a finite extension, so we may as well assume $x$ is a $K$-point. The decomposition group at $x$ has the form $\wh{\bZ} \rtimes G_K$, where $\wh{\bZ}$ is the geometric inertia group and $G_K$ acts on it through the cyclotomic character $\chi$. Let $T$ be a topological generator of $\wh{\bZ}$, written multiplicatively. Then for $\sigma \in G_K$ we have $\alpha(T)=\alpha(\sigma T \sigma^{-1})=\alpha(T^{\chi(\sigma)})=\alpha(T)^{\chi(\sigma)}$. It follows that $\alpha(T)$ has finite order.
\end{proof}

\subsection{Some lemmas from representation theory}

\begin{lemma}
\label{lemma:rep}
Let $\rho$ and $\rho'$ be finite dimensional representations of a group $G$ with equal determinant. Suppose there exists a normal subgroup $H$ of $G$ such that $\rho \vert_H$ and $\rho' \vert_H$ are absolutely irreducible and isomorphic. Then there exists a finite index subgroup $G'$ of $G$ such that $\rho \vert_{G'}$ and $\rho' \vert_{G'}$ are isomorphic.
\end{lemma}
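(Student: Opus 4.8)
The plan is to use Schur's lemma together with a cohomological obstruction argument. Since $\rho|_H \cong \rho'|_H$, fix an isomorphism $\phi \colon \rho|_H \xrightarrow{\sim} \rho'|_H$ of $H$-representations. For each $g \in G$, conjugating by $g$ gives another isomorphism of $H$-representations $\rho|_H \to \rho'|_H$ (using normality of $H$), namely $c(g) = \rho'(g)^{-1} \circ \phi \circ \rho(g)$; more precisely, $\phi$ and $\rho'(g) \circ \phi \circ \rho(g)^{-1}$ are both $H$-isomorphisms $\rho|_H \to \rho'|_H$. By absolute irreducibility of $\rho|_H$ and Schur's lemma, any two such isomorphisms differ by a scalar, so there is a function $\lambda \colon G \to \ol{\bQ}_{\ell}^{\times}$ (or whatever the coefficient field is) with $\rho'(g) \circ \phi \circ \rho(g)^{-1} = \lambda(g) \phi$. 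A direct computation shows $\lambda$ is a homomorphism (it is trivial on $H$), i.e. a character $\lambda \colon G/H \to \ol{\bQ}_{\ell}^{\times}$.

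Next I would pin down $\lambda$ using the determinant hypothesis. Taking determinants in the relation $\rho'(g)\phi = \lambda(g)\phi\,\rho(g)$ gives $\det\rho'(g) \cdot \det\phi = \lambda(g)^n \det\phi \cdot \det\rho(g)$, where $n = \dim \rho$; since $\det\rho = \det\rho'$, we get $\lambda(g)^n = 1$ for all $g$. Hence $\lambda$ takes values in the $n$-th roots of unity, so $\ker\lambda$ is a finite index subgroup $G'$ of $G$ containing $H$. On $G'$ the relation becomes $\rho'(g)\phi = \phi\,\rho(g)$, i.e. $\phi$ is a $G'$-equivariant isomorphism $\rho|_{G'} \xrightarrow{\sim} \rho'|_{G'}$, which is exactly what we want.

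The one point requiring a little care — and the main (mild) obstacle — is the verification that $g \mapsto \lambda(g)$ is genuinely a homomorphism and is well-defined independent of the choice of $\phi$ up to the final conclusion; this is a routine cocycle-type manipulation, but one must be careful about the order of composition and the fact that $\phi$ is only canonical up to scalar, so that $\lambda$ itself is canonical (a different choice of $\phi$ changes nothing since the ambiguity is central). Everything else is immediate from Schur's lemma and taking determinants. Note the argument does not need $\rho$ or $\rho'$ to be irreducible as $G$-representations, only the restrictions to $H$; and finite-dimensionality is used only to make sense of determinants and to invoke Schur over a not-necessarily-algebraically-closed field via the absolute irreducibility hypothesis.
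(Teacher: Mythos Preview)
Your proof is correct and essentially identical to the paper's: both fix an $H$-isomorphism $\phi$ (the paper calls it $f$), use Schur's lemma and normality of $H$ to extract a scalar-valued function $\lambda$ (the paper's $\chi$) via the relation $\rho'(g)\,\phi\,\rho(g)^{-1}=\lambda(g)\phi$, verify it is a homomorphism, kill $\lambda^n$ using the equal-determinant hypothesis, and take $G'=\ker\lambda$. The paper notes in passing that in fact $[G:G']\mid n$, which you also effectively observe.
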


\begin{proof}
Let $V$ and $V'$ be the spaces for $\rho$ and $\rho'$ and let $f \colon V \to V'$ be an isomorphism of $H$ representations. One easily verifies that for $g \in G$ the endomorphism $g^{-1} f^{-1} g f$ of $V$ commutes with $H$, and is therefore given by multiplication by some scalar $\chi(g)$. Thus we have $f^{-1} g f=\chi(g) g$ for all $g \in G$. It follows easily from this that $\chi$ is a homomorphism, i.e., $\chi(gg')=\chi(g) \chi(g')$. We thus see that $\chi \otimes \rho$ and $\rho'$ are isomorphic as representations of $G$. Taking determinants, we see that $\chi^n$ is trivial, where $n$ is the dimension of $\rho$. The result follows by taking $G'$ to be the kernel of $\chi$; this has finite index since the image of $\chi$ is contained in the group of $n$th roots of unity of $k$. (In fact, the index of $G'$ divides $n$.)
\end{proof}

\begin{lemma}\label{stupid1}
Let $G$ be a group and $H$ a normal subgroup of $G$. Consider a two-dimensional irreducible representation $\rho\colon G\to \GL_2(\ol{\bQ}_{\ell})$. Assume that for some element $h\in H$, the matrix $\rho(h)$ is a non-trivial unipotent element.
Then $\rho\vert_H$ is irreducible.
\end{lemma}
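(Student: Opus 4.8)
The plan is to argue by contradiction. Suppose $\rho\vert_H$ is reducible. Since $H$ is normal in $G$ and $\rho$ is two-dimensional, the restriction $\rho\vert_H$ is then a direct sum of two characters, or (if the two summands could fail to be semisimple) at least stabilizes a line $\ell_0 \subset \ol{\bQ}_\ell^2$. In fact, because $\rho(h)$ is unipotent and nontrivial, it has a unique eigenline, so any $\rho(H)$-invariant line must equal that eigenline; thus $\rho\vert_H$ has exactly one invariant line $\ell_0 = \ker(\rho(h) - 1)$.

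Next I would use normality of $H$ to see that $G$ permutes the $H$-invariant lines: for $g \in G$ and an $H$-invariant line $\ell'$, the line $\rho(g)\ell'$ is invariant under $\rho(g H g^{-1}) = \rho(H)$, hence is again $H$-invariant. Since there is exactly one such line, namely $\ell_0$, we conclude $\rho(g)\ell_0 = \ell_0$ for all $g \in G$, i.e. $\ell_0$ is $\rho(G)$-invariant. This contradicts the irreducibility of $\rho$. Therefore $\rho\vert_H$ is irreducible.

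I should double-check the one subtle point: why the only $H$-invariant line is $\ell_0$. If $\rho\vert_H$ were reducible with two distinct invariant lines, then $\rho(H)$ would be simultaneously diagonalizable (up to a common eigenbasis), so every element of $\rho(H)$ would be diagonalizable; but $\rho(h)$ is a nontrivial unipotent, which is not diagonalizable — contradiction. Hence $\rho\vert_H$ reducible forces a \emph{unique} invariant line, and that line is necessarily the fixed line of the unipotent $\rho(h)$. This is the only place any real care is needed; the rest is the standard Clifford-theory observation that $G$ permutes the isotypic/invariant structure of $\rho\vert_H$. I do not expect any genuine obstacle here — the argument is short and elementary — so the "main step" is simply the bookkeeping that a nontrivial unipotent pins down the invariant line uniquely.
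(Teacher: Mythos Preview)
Your proof is correct and follows essentially the same approach as the paper: argue by contradiction, use the non-trivial unipotent $\rho(h)$ to pin down a \emph{unique} $H$-invariant line, then use normality of $H$ to conclude this line is $G$-invariant, contradicting irreducibility. The paper's version is more terse, but your added justification for uniqueness (a non-trivial unipotent is not diagonalizable, so $\rho(H)$ cannot preserve two distinct lines) is exactly the content behind the paper's phrase ``by the existence of $h$.''
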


\begin{proof}
Suppose not. Then by the existence of $h$, there exists a unique one-dimensional
subspace $V$ invariant under $H$. But since $H$ is normal in $G$, it follows
that $V$ is invariant under $G$ as well. Contradiction.
\end{proof}

\section{Descent results for abelian varieties} \label{s:descent}

For this section, fix a finitely generated field $K$. We consider the following condition on a continuous representation $\rho \colon G_K \to \GL_n(\ol{\bQ}_{\ell})$:
\begin{itemize}
\item[($\ast$)] There exists an integral scheme $X$ of finite type over $\Spec(\bZ)$ with function field $K$ and a lisse $\ol{\bQ}_{\ell}$-sheaf $L$ on $X$ with generic fiber $\rho$ such that at every closed point $x$ of $X$ the trace of Frobenius on $L_x$ is rational.
\end{itemize}
In our proof of Theorem~\ref{mainthm}, we will show that $\rho$ comes from an elliptic curve over some finite extension of $K'$. We use the following result to conclude that we actually get an elliptic curve over $K$.

\begin{proposition}\label{extn}
Let $\rho \colon G_K \to \GL_2(\ol{\bQ}_{\ell})$ be a Galois representation satisfying condition ($\ast$). Suppose that there exists a finite extension $K'/K$ such that $\rho \vert_{K'}$ comes from a non-CM elliptic curve $E$. Then $\rho$ comes from an elliptic curve\footnote{The non-CM condition is in fact necessary. Let $K=\bF_p,K''=\bF_{p^2}$ and $\ell$ be such 
that $p$ has a square root mod $\ell$. Now let $\rho$ have eigenvalues $\pm\sqrt{p}$. Then $\rho$ 
can't come from an elliptic curve because the determinant is not the cyclotomic character but $\rho\vert_{K'}$ corresponds to a supersingular elliptic curve. }
.
\end{proposition}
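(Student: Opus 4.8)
\textbf{Proof proposal for Proposition~\ref{extn}.}

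The plan is to descend the elliptic curve $E/K'$ from $K'$ down to $K$, using the compatible-system input from condition ($\ast$) to pin things down over $K$. First I would enlarge $K'$ so that it is Galois over $K$, with group $\Gamma=\Gal(K'/K)$; this does not affect the hypotheses since $\rho\vert_{K''}$ still comes from the base change $E_{K''}$ for any intermediate $K''$. Let $T$ be the rational Tate module $V_\ell(E)$, so that as a $G_{K'}$-representation we have $T\cong\rho\vert_{K'}$; fix such an isomorphism. Because $E$ is non-CM, Faltings' theorem (Proposition~\ref{Ffaltings}) gives $\End_{\ol{\bQ}_\ell[G_{K'}]}(T)=\ol{\bQ}_\ell$, i.e. $\rho\vert_{K'}$ is absolutely irreducible. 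I would then apply Lemma~\ref{lemma:rep} with $G=G_K$ and $H=G_{K'}$: since $\rho$ and $\rho_E:=V_\ell(E)$ (viewed as a $G_K$-rep via the chosen isomorphism on $H$ — more precisely, I want to first arrange that the two $G_K$-representations have the same determinant, see below) have isomorphic absolutely irreducible restrictions to the normal finite-index subgroup $H$, there is a finite-index subgroup $G''\subset G_K$ on which $\rho$ and $\rho_E$ agree; enlarging $K'$ to the fixed field of $G''$ (it is harmless, as noted) we may assume $\rho\cong\rho_E$ as $G_{K'}$-representations and that $\Gamma$-conjugate twists are controlled by a character $\chi\colon G_K\to\ol{\bQ}_\ell^\times$ with $\chi^2$ trivial, as in the proof of that lemma.

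The main point is then the following descent construction. Consider the Weil restriction $B=\Res_{K'/K}(E_{K'})$, an abelian variety over $K$ of dimension $[K':K]$ with an action of the group algebra $\ol{\bQ}_\ell[\Gamma]$ (really $\bQ[\Gamma]$ on the motive, but I will work with Tate modules). Its $\ell$-adic Tate module is $V_\ell(B)=\Ind_{G_{K'}}^{G_K}(T)$. Inside $V_\ell(B)$ I want to locate a two-dimensional $G_K$-stable summand realizing $\rho$ (up to the quadratic twist $\chi$). By Frobenius reciprocity, $\Hom_{G_K}(\rho,\Ind_{G_{K'}}^{G_K}T)=\Hom_{G_{K'}}(\rho\vert_{K'},T)\neq 0$; since $\rho$ is irreducible over $G_K$ (this irreducibility over $G_K$ is where condition ($\ast$) enters: by Proposition~\ref{chebo}/Corollary~\ref{cheborep} together with the rationality of traces, the semisimplification of $\rho$ is determined by a $\bQ$-valued character system, and one checks $\rho$ cannot be a sum of characters because its restriction to $G_{K'}$ is irreducible), any nonzero such map is injective, and its image is a $G_K$-stable two-dimensional subspace $W\subset V_\ell(B)$ with $W\cong\rho$. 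Now $W$ is cut out by an idempotent in $\End_{G_K}(V_\ell(B))$; by Faltings' theorem over $K$, $\End_{G_K}(V_\ell(B))=\End_K(B)\otimes\ol{\bQ}_\ell$, so $W$ corresponds (after possibly passing to an isogeny to clear denominators, and after noting that the idempotent can be taken $\bQ$-rational because the trace system is $\bQ$-valued and defined over a number field — here again ($\ast$)) to an abelian subvariety $A\subset B$ over $K$ with $\dim A=1$, i.e. an elliptic curve $A/K$ with $V_\ell(A)\cong\rho$. The leftover issue is the possible quadratic twist: $V_\ell(A)$ might equal $\rho\otimes\chi$ rather than $\rho$. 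But $\chi$ has order dividing $2$, hence corresponds to a quadratic (or trivial) character $G_K\to\{\pm1\}$, and replacing $A$ by its quadratic twist $A^{(\chi)}$ — which is again an elliptic curve over $K$ — changes $V_\ell(A)$ by exactly $\otimes\chi$, so one of $A$, $A^{(\chi)}$ has Tate module isomorphic to $\rho$. That is the desired elliptic curve.

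The step I expect to be the main obstacle is the rationality/descent of the idempotent: \emph{a priori} the splitting $V_\ell(B)=W\oplus W'$ is only $\ol{\bQ}_\ell[G_K]$-linear, whereas to get an honest abelian subvariety of $B$ over $K$ I need a projector in $\End_K(B)\otimes\bQ$. Faltings gives $\End_K(B)\otimes\bQ_\ell=\End_{\bQ_\ell[G_K]}(V_\ell(B))$, but I must produce the projector already over $\bQ$ (or a number field), and this is exactly where hypotheses (c)/($\ast$) — rationality of Frobenius traces, hence a $\bQ$-structure on the relevant isotypic piece — must be invoked carefully, perhaps by first descending the coefficient field of the $G_K$-representation $\rho$ to a number field using Corollary~\ref{cheborep} and a model over some $X$, then noting the endomorphism algebra of $B$ is defined over $K$ and contains the relevant factor. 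A secondary technical nuisance is keeping track of isogenies: the abelian subvariety attached to an idempotent is only well-defined up to isogeny, but any elliptic curve isogenous to one with Tate module $\rho$ has a quadratic twist with Tate module $\rho$ (once the determinant is the cyclotomic character, which holds here since $\det\rho$ is cyclotomic by hypothesis and is unchanged on $G_{K'}$), so this causes no real trouble. The footnote's counterexample shows the non-CM hypothesis is genuinely used: it is needed precisely so that $\rho\vert_{K'}$ is absolutely irreducible with $\End=\ol{\bQ}_\ell$, which is what makes Lemma~\ref{lemma:rep} and the Frobenius-reciprocity argument go through.
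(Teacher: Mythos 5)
Your proposal starts from the same place as the paper — the Weil restriction $B=\Res_{K'/K}(E)$ and Frobenius reciprocity — but it diverges exactly where the real work happens, and there it has a genuine gap.

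You correctly locate $\rho$ inside $V_\ell(B)\otimes\ol{\bQ}_\ell$ and observe you'd like to cut it out by an idempotent. But the idempotent you want generally \emph{does not exist} in $\End_K(B)\otimes\bQ$, even after allowing isogenies. Decompose $B$ up to isogeny into simple factors; the factor $A=B_i$ in which $\rho$ occurs has $F:=\End_K(A)\otimes\bQ$ a number field, and $\rho\cong V_{w_0}(A)\otimes_{F_{w_0}}\ol{\bQ}_\ell$ for a single place $w_0\mid\ell$. Since $F$ is a field it has no nontrivial idempotents, while the projector onto $V_{w_0}(A)$ lives only in $F\otimes\bQ_\ell=\prod_{w\mid\ell}F_w$; so unless $F=\bQ$ there is no $\bQ$-rational idempotent (and no 1-dimensional abelian subvariety). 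Rationality of Frobenius traces is perfectly compatible with $F\ne\bQ$ a priori, so the gesture ``the idempotent can be taken $\bQ$-rational because the trace system is $\bQ$-valued'' is not an argument. The substance of the paper's proof is precisely to rule out $F\ne\bQ$: using ($\ast$) one shows via Chebotarev (Corollary~\ref{cheborep}) that for \emph{each} rational prime $p$ and place $w\mid p$ of $F$, the base-changed representation $V_w(A)\otimes_{F_w}\ol{\bQ}_p$ has the same Frobenius traces as $V_p(E)\otimes\ol{\bQ}_p$ over $K'$; since $E$ is non-CM its mod-$p$ image is large, so Lemma~\ref{ratlrep} applies and shows $V_w(A)\otimes\ol{\bQ}_p$ is defined over $\bQ_p$; combined with $\End_{\bQ_p[G_K]}(V_w(A))=F_w$ (Proposition~\ref{Ffaltings}) this forces $F_w=\bQ_p$ for every $w$, hence $F=\bQ$ and $A$ is an elliptic curve. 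This step — running the argument at \emph{every} prime $p$ and using the image from the elliptic curve to invoke Lemma~\ref{ratlrep} — is the missing ingredient in your write-up.

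A second gap: your first move (``enlarge $K'$ so it is Galois over $K$'') silently assumes $K'/K$ is separable, which need not hold since $K$ is merely finitely generated. The paper treats the purely inseparable case separately, and quite simply: if $(K')^p=K$ one replaces $E$ by $E^{(p)}=E\times_{K',\mathrm{Fr}}K'$, which is defined over $K$, and uses that relative Frobenius induces an isomorphism on $\ell$-adic Tate modules together with $G_{K'}=G_K$ in that situation. You should add this reduction before assuming $K'/K$ Galois. Finally, a small remark: irreducibility of $\rho$ over $G_K$ follows immediately from irreducibility of $\rho|_{K'}$, so condition ($\ast$) isn't what's needed there; ($\ast$) enters only in the $F=\bQ$ argument above.
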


We proceed with a number of lemmas.

\begin{lemma} \label{ratlrep}
Let $\rho \colon G \to \GL_n(\ol{\bQ}_{\ell})$ a continuous representation of the profinite group $G$. Suppose there exists an open subgroup $H$ of $G$ such that $\rho(H)$ is contained in $\GL_n(\bZ_{\ell})$ and contains an open subgroup of $\GL_n(\bZ_{\ell})$. Suppose also that there is a dense set of elements $\{g_i\}_{i \in I}$ of $G$ such that $\tr{\rho(g_i)} \in \bQ_{\ell}$ for all $i \in I$. Then $\rho(G)$ is contained in $\GL_n(\bQ_{\ell})$.
\end{lemma}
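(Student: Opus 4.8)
The plan is to show that conjugating $\rho$ by a suitable element lands $\rho(G)$ inside $\GL_n(\bQ_\ell)$, using that traces already live in $\bQ_\ell$ on a dense set and that $\rho(H)$ is large enough to ``see'' the full matrix algebra. First I would replace $H$ by the open subgroup of $\GL_n(\bZ_\ell)$ that it contains; since $\rho(H)$ contains an open subgroup of $\GL_n(\bZ_\ell)$, and such an open subgroup is Zariski-dense in $\GL_n$, the $\bQ_\ell$-span of $\rho(H)$ inside $M_n(\ol\bQ_\ell)$ is all of $M_n(\bQ_\ell)$ — more precisely, one can pick $n^2$ elements $h_1,\dots,h_{n^2}\in H$ such that $\rho(h_1),\dots,\rho(h_{n^2})$ form a $\bQ_\ell$-basis of a matrix algebra $B\subseteq M_n(\ol\bQ_\ell)$ with $B\otimes_{\bQ_\ell}\ol\bQ_\ell = M_n(\ol\bQ_\ell)$. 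Because $H$ is open and the Frobenius-like elements $\{g_i\}$ are dense, each coset $g_i H$ meets $\{g_j\}$, and by density again one finds, for each pair $(g,h)$ with $g\in G$, $h\in H$, elements of $\{g_i\}$ approximating $g$, $gh_k$, and $gh_kh_l$; continuity of $\rho$ and of the trace then gives $\tr(\rho(g)\rho(h_k))\in\bQ_\ell$ and $\tr(\rho(g)\rho(h_k)\rho(h_l))\in\bQ_\ell$ for all $k,l$ and all $g\in G$.

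Next I would exploit these trace relations. Let $V=\ol\bQ_\ell^{\,n}$ with its $G$-action. The algebra $B$ acts on $V$; since $B\otimes\ol\bQ_\ell = M_n(\ol\bQ_\ell)$, $V$ is an absolutely irreducible $B$-module, so by Jacobson density (or just dimension count) $\End_B(V)=\ol\bQ_\ell$ and the trace pairing $(b,b')\mapsto \tr_V(bb')$ is non-degenerate on $B$. Fix the dual basis: the relation $\tr(\rho(g)\rho(h_k))\in\bQ_\ell$ for all $k$ says that the $B$-component of $\rho(g)$ (under $M_n(\ol\bQ_\ell)=B\oplus (\text{orthogonal complement})$, but in fact $\rho(g)$ normalizes $B$ — see below) has coordinates in $\bQ_\ell$ relative to the basis $\rho(h_k)$. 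To make this precise I would first check that $\rho(g)$ normalizes $\rho(H)$ up to the relevant approximation: for $g\in G$ and $h\in H$, $ghg^{-1}$ lies in $G$ and can be approximated by $\{g_i\}$, and since $H$ is open and normal-ish in the needed sense... — actually $H$ need not be normal, so instead I argue directly on traces: the span $B$ is canonically $M_n$ after extending scalars, hence $\rho(g)$, having rational traces against a $\bQ_\ell$-basis of $B=M_n(\bQ_\ell)$, must itself lie in $M_n(\bQ_\ell)=\GL_n(\bQ_\ell)$, because the pairing $M_n(\bQ_\ell)\times M_n(\bQ_\ell)\to\bQ_\ell$, $(X,Y)\mapsto\tr(XY)$, is perfect and an element $X\in M_n(\ol\bQ_\ell)$ with $\tr(X\rho(h_k))\in\bQ_\ell$ for a $\bQ_\ell$-basis $\{\rho(h_k)\}$ of $M_n(\bQ_\ell)$ is forced to lie in $M_n(\bQ_\ell)$. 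This gives $\rho(G)\subseteq\GL_n(\bQ_\ell)$ directly, provided $B=M_n(\bQ_\ell)$ sits inside $M_n(\ol\bQ_\ell)$ in the standard way — which we may arrange after a single conjugation by an element of $\GL_n(\ol\bQ_\ell)$, since any $\ol\bQ_\ell$-form of $M_n$ that contains $\GL_n(\bZ_\ell)$ is conjugate to the standard one.

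I expect the main obstacle to be the bookkeeping around $H$ not being normal and $\rho(H)$ only containing (rather than equaling) an open subgroup of $\GL_n(\bZ_\ell)$: one must be careful that the ``dense rational-trace'' hypothesis genuinely transfers from the generators $g_i$ to products $g h_{k_1}\cdots h_{k_m}$ for arbitrary $g\in G$, which requires $gH$ (equivalently each coset of the open subgroup $\rho^{-1}(\text{that open subgroup of }\GL_n(\bZ_\ell))\cap H$) to meet the dense set — true since that set is dense and the subgroup is open — together with continuity of $\rho$ to pass to the limit inside the closed subset $\{M : \tr(M\rho(h_k))\in\bQ_\ell \ \forall k\}$ of $\GL_n(\ol\bQ_\ell)$. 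Once the rationality of all these products is in hand, the linear-algebra conclusion (rational traces against a full rational basis of $M_n$ forces rational entries) is immediate, and the lemma follows.
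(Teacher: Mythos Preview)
Your approach is correct and is essentially the paper's: both use that $\rho(H)$ spans $M_n(\bQ_\ell)$ over $\bQ_\ell$, that density of $\{g_i\}$ plus continuity forces $\tr(\rho(g))\in\bQ_\ell$ for \emph{all} $g\in G$, and then the nondegenerate trace pairing to conclude $\rho(g)\in M_n(\bQ_\ell)$. The paper streamlines the execution by noting directly that $1+\ell^m e_{ij}\in\rho(H)$ for some $m$, hence $e_{ij}\in\bQ_\ell[\rho(G)]$, and then writing $A_{ij}=\tr(e_{ii}Ae_{jj})\in\tr(\bQ_\ell[\rho(G)])=\bQ_\ell$; this bypasses your detours through Jacobson density, normality of $H$, and conjugating $B$ into standard position (the last being unnecessary anyway, since the hypothesis $\rho(H)\subset\GL_n(\bZ_\ell)$ already places $B$ inside the standard $M_n(\bQ_\ell)$).
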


\begin{proof}
Let $e_{ij}$ be the $n \times n$ matrix with a~1 in the $(i,j)$ position and~0 elsewhere. By assumption, there exists $m$ such that $\rho(H)$ contains $1+\ell^m e_{ij}$ for all $i,j$, and so $e_{ij} \in \bQ_{\ell}[\rho(G)]$. For any $A \in \rho(G)$ we have $A_{i,j}=\tr(e_{ii}Ae_{jj}) \in \tr(\bQ_{\ell}[\rho(G)])=\bQ_{\ell}$, where the last equality follows from the fact that $\tr \circ \rho \colon G \to \bQ_{\ell}$ is continuous and $\tr(\rho(g_i)) \in \bQ_{\ell}$ for all $i$. It thus follows that $\rho(G)\subset \GL_n(\bQ_{\ell})$ as claimed.
\end{proof}

The following lemma is basically \cite[Corollary~2.4]{taylor}.

\begin{lemma} \label{gl2type}
Let $\rho \colon G_K \to \GL_2(\ol{\bQ}_{\ell})$ be a continuous irreducible Galois representation. Suppose that there is a finite separable extension $K'/K$ such that $\rho \vert_{G_{K'}}$ comes from a non-CM elliptic curve $E$. Then there is an abelian variety $A/K$ with $F=\End(A) \otimes \bQ$ is a number field of degree $\dim(A)$ such that $\rho \cong \ol{\bQ}_{\ell} \otimes_{F_w} V_w(A)$ for some place $w$ of $A$.
\end{lemma}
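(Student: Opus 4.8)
\textbf{Proof proposal for Lemma~\ref{gl2type}.}

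The plan is to follow the strategy of Taylor's argument \cite[Corollary~2.4]{taylor}, adapting it to the present hypotheses. The starting point is the observation that over $K'$ the representation $\rho\vert_{G_{K'}}$ is realized on the rational Tate module $V_\ell(E)$ of a non-CM elliptic curve $E/K'$. The non-CM hypothesis is what makes $E$ rigid enough to control: all the Galois conjugates of $E$ over $K$ are isogenous up to twisting, and there are no unexpected endomorphisms to worry about. First I would consider the Galois conjugates $\{{}^\sigma\! E\}_{\sigma \in \Gal(K'/K)}$ (after possibly enlarging $K'$ so that it is Galois over $K$). Since $\rho$ is defined over $K$, for each $\sigma$ there is an isomorphism of $G_{K'}$-representations between $V_\ell({}^\sigma\! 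E)$ and $V_\ell(E)$ up to a character twist; because $E$ is non-CM, Faltings' isogeny theorem forces ${}^\sigma\! E$ to be isogenous to a quadratic twist of $E$. This is the key input that lets one build a single abelian variety over $K$.

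The construction of $A$ itself should proceed by (a variant of) restriction of scalars followed by cutting out an isotypic piece. Concretely, I would form $B = \mathrm{Res}_{K'/K} E$, an abelian variety over $K$ of dimension $d = [K':K]$, and examine its $\ell$-adic Tate module as a $G_K$-representation: it is the induced representation $\mathrm{Ind}_{G_{K'}}^{G_K}(V_\ell(E))$. Since $\rho\vert_{G_{K'}}$ is a constituent of $V_\ell(E)$ and $\rho$ extends to $G_K$, Frobenius reciprocity shows $\rho$ (or a twist of it) occurs in $V_\ell(B) \otimes \ol{\bQ}_\ell$. The isotypic component of $V_\ell(B)$ corresponding to the $G_K$-orbit of $\rho$ is then cut out by an idempotent in the endomorphism algebra $\End_K(B)\otimes\bQ$, which by Poincaré reducibility yields an abelian variety quotient (or subvariety) $A/K$. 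Taking $F$ to be the field generated over $\bQ$ by the action of $\End_K(A)\otimes\bQ$ on the relevant $\ell$-adic piece, one arranges that $F$ acts on $V_\ell(A)$ so that some completion $V_w(A)$ realizes $\rho$; irreducibility of $\rho$ together with Proposition~\ref{compsys} and Proposition~\ref{Ffaltings} then pins down $[F:\bQ] = \dim A$, i.e.\ that $A$ is of $\GL_2$-type. One uses condition ($\ast$)---or rather just the existence of $E$---to see $F$ is totally real or CM as needed, though strictly for the statement one only needs $F$ a number field of the right degree.

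The main obstacle, I expect, is the bookkeeping around \emph{twists}: the conjugates ${}^\sigma\! E$ agree with $E$ only up to characters, and assembling these local comparison isomorphisms into a genuine abelian variety over $K$ (rather than over some further extension) requires either a cocycle/descent argument or the restriction-of-scalars trick done carefully enough that the character ambiguities are absorbed. One must check that the idempotent cutting out the $\rho$-isotypic piece is actually defined over $K$ and not merely over $K'$; this is where irreducibility of $\rho$ over $G_K$ is essential, since it guarantees the $G_K$-orbit of the constituent is a single well-defined block stable under the full Galois action. A secondary technical point is verifying $\End(A)\otimes\bQ$ is a \emph{field} (commutative, no quaternionic part): here the non-CM hypothesis on $E$ rules out the extra endomorphisms that would enlarge the algebra, and absolute irreducibility of $V_w(A)$ from Proposition~\ref{Ffaltings} forces the endomorphism algebra to be as small as possible. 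Once these descent-of-twist issues are handled, the dimension count and the identification $\rho \cong \ol{\bQ}_\ell \otimes_{F_w} V_w(A)$ are formal.
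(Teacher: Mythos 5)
Your proposal follows essentially the same route as the paper: form $B = \Res_{K}^{K'} E$, use Frobenius reciprocity together with Faltings' theorem to show $\rho$ occurs with multiplicity one in $V_\ell(B) \otimes \ol{\bQ}_\ell$, take the isogeny factor containing it, and invoke Propositions~\ref{compsys} and~\ref{Ffaltings} for the degree count. The twist/cocycle and idempotent-rationality worries you flag as the ``main obstacle'' never actually arise: multiplicity one forces the isogeny factor $B_i$ in which $\rho$ appears to be \emph{simple} (it is a power of a simple abelian variety and $\rho$ occurs just once), so one simply sets $A=B_i$, already defined over $K$, and observes that the division algebra $D_i=\End_K(A)\otimes\bQ$ acts on the one-dimensional $\Hom$-space and hence embeds in $\ol{\bQ}_\ell$, making it a number field with no further descent argument needed.
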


\begin{proof}
Consider $B=\Res_{K}^{K'}E$, an abelian variety over $K$ of dimension $[K':K]$ \cite[\S 7.6]{bosch}. Write $B=\prod_{i=1}^r B_i$ (up to isogeny) where each $B_i$ is a power of a simple abelian variety. Thus
\begin{displaymath}
\End_K(B)\otimes \bQ=\bigoplus_{i=1}^r D_i,
\end{displaymath}
where $D_i=\End_K(B_i) \otimes \bQ$ is a simple algebra. We have
\begin{displaymath}
\Hom_{G_K}(\rho, \ol{\bQ}_{\ell} \otimes V_{\ell}(B))=\Hom_{G_{K'}}(\rho \vert_{G_{K'}}, \ol{\bQ}_{\ell} \otimes V_{\ell}(E))=\ol{\bQ}_{\ell}
\end{displaymath}
where the latter follows from Falting's proof of the Tate conjecture \cite[Theorem 1, page 211]{faltings} since $E$ is non-CM. We thus see that $\rho$ occurs uniquely in $\ol{\bQ}_{\ell} \otimes V_{\ell}(B_i)$ for some $i$. Let $A$ be this $B_i$. Since $A$ is a power of a simple abelian variety and $\rho$ occurs uniquely in its Tate module, $A$ must itself be simple. The endomorphism ring $D_i$ must preserve $\rho$, and thus the action of $\rho$ comes from an algebra homomorphism $\psi \colon D_i \to \ol{\bQ}_{\ell}$, which is injective since $D_i$ is simple. We thus see that $D_i=F$ is a number field, and $\psi$ corresponds to a place $w$ of $F$ above $\ell$. Since $V_w(A) \otimes_{F_w} \ol{\bQ}_{\ell}$ contains $\rho$ and is absolutely irreducible by Proposition~\ref{Ffaltings}, it is equal to $\rho$. Therefore $V_w(A)$ is two dimensional over $F_w$, and so $[F:\bQ]=\dim(A)$ by Proposition~\ref{compsys}.
\end{proof}

\begin{lemma}
Proposition~\ref{extn} holds if $K'/K$ is separable.
\end{lemma}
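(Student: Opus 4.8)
The plan is to reduce Proposition~\ref{extn} in the separable case to the previous lemma, Lemma~\ref{gl2type}, combined with a descent argument coming from condition ($\ast$). By Lemma~\ref{gl2type} we obtain an abelian variety $A/K$ with $F = \End_K(A) \otimes \bQ$ a number field of degree $\dim(A)$ and a place $w$ of $F$ above $\ell$ such that $\rho \cong \ol{\bQ}_{\ell} \otimes_{F_w} V_w(A)$. The goal is to show $F = \bQ$, for then $A$ is an elliptic curve and $V_w(A) = V_{\ell}(A)$ realizes $\rho$ (possibly after extending scalars, which is harmless since $\rho$ is already defined over $\ol{\bQ}_{\ell}$; more precisely $\rho$ descends to an $\ell$-adic sheaf so the coefficient field issue can be handled directly).

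First I would use Proposition~\ref{compsys}: for $\sigma \in \End_K(A) = \cO_F$ (an order in $F$, commuting with $F$), the characteristic polynomial of $\sigma$ acting on $V_w(A)$ has coefficients in $F$ and is independent of $w$. Since $V_w(A)$ is two-dimensional over $F_w$ and $\sigma \in F$ acts as a scalar, this characteristic polynomial is $(T - \sigma)^2$; comparing across places $w' \mid \ell$ forces all embeddings $F \hookrightarrow \ol{\bQ}_{\ell}$ induced by the various $w'$ to agree on $\sigma$. That alone does not pin down $F = \bQ$, so the real input must be condition ($\ast$): the traces of Frobenius of $\rho$ are rational. The trace of Frobenius $F_x$ on $\rho = \ol{\bQ}_{\ell}\otimes_{F_w} V_w(A)$ equals the trace of $F_x$ on $V_w(A)$ as an $F_w$-vector space, which by Proposition~\ref{compsys} (applied with $\sigma$ trivial, or rather to the Frobenius endomorphism itself, which commutes with $F$) lies in $F$ and is independent of $w$. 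Thus the trace of Frobenius, viewed as an element of $F$, maps into $\bQ_\ell$ under every place of $F$ above $\ell$; since it already lies in the number field $F$ and is a rational number by hypothesis, we learn the traces generate a subfield of $F$ that is, well, contained in $\bQ$. But $F$ is generated over $\bQ$ by the traces of Frobenius on $V_\ell(A)$ together with the endomorphisms — and here one should invoke that the field generated by traces of Frobenius of an absolutely irreducible representation (which $V_w(A)$ is, by Proposition~\ref{Ffaltings}) is the full field of definition, so that $F$ is generated over $\bQ$ by these traces. Concretely, by Corollary~\ref{cheborep}-type reasoning (or directly Lemma~\ref{ratlrep}), if all traces $\tr \rho(F_x)$ lie in $\bQ$ and the $F_x$ are dense (Proposition~\ref{chebo}), then $\tr \rho$ takes values in $\bQ_\ell$, hence the representation $V_w(A)$ has rational character, forcing $F_w = \bQ_\ell$ and $F = \bQ$.

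The cleanest route: having produced $A/K$ of $\GL_2$-type from Lemma~\ref{gl2type}, I would argue that $F$, the field generated by the traces of Frobenius on $V_w(A)$ over all closed points, equals $\bQ$ because (i) these traces lie in $F$ and are independent of $w$ by Proposition~\ref{compsys}, (ii) they are rational by condition ($\ast$), and (iii) they generate $F$ over $\bQ$ — the last point because the $G_K$-representation on $V_\ell(A) = \bigoplus_{w\mid\ell} V_w(A)$ has the semisimplification determined by its traces (Corollary~\ref{cheborep}), and if all of $F$ did not arise this way one could produce a smaller abelian variety or contradict simplicity of $A$. With $F = \bQ$ and $\dim A = [F:\bQ] = 1$, $A$ is the desired elliptic curve, and $V_\ell(A) \cong \rho$ as representations over $\ol{\bQ}_\ell$ (indeed $\rho$ already takes values in $\GL_2(\bQ_\ell)$ by Lemma~\ref{ratlrep}).

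The main obstacle I anticipate is step (iii): carefully justifying that the coefficient field $F$ of the $\GL_2$-type abelian variety $A$ is forced down to $\bQ$, rather than merely that each place $w\mid\ell$ of $F$ has completion $\bQ_\ell$. One needs to rule out, e.g., $F$ a real quadratic field in which $\ell$ splits (so $F_w = \bQ_\ell$ for each $w\mid\ell$, yet $F \neq \bQ$). The resolution must use that condition ($\ast$) gives rationality of traces \emph{at all closed points of a model over $\bZ$}, not just at places above $\ell$, together with Proposition~\ref{compsys} telling us these traces are genuine elements of $F$ that happen to be rational — and then that such traces generate $F$. Handling the coefficient-field descent and making the identification $V_w(A)\cong \rho$ honest (as opposed to after extending scalars) is where the work lies; everything else is an assembly of the lemmas already proved. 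I would also note the footnote's warning: the non-CM hypothesis in Lemma~\ref{gl2type} is essential precisely so that $\rho$ occurs with multiplicity one in $V_\ell(B)$, which is what makes $A$ simple and $F$ a field rather than a product.
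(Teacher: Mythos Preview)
Your setup is correct: apply Lemma~\ref{gl2type} to produce a simple abelian variety $A/K$ with $F=\End_K(A)\otimes\bQ$ a number field of degree $g=\dim A$ and a place $w_0\mid\ell$ with $\rho\cong V_{w_0}(A)\otimes_{F_{w_0}}\ol\bQ_\ell$, and then try to show $F=\bQ$. You also correctly isolate the obstacle: knowing only that the $F$-valued Frobenius traces lie in $\bQ$ does not obviously force $F=\bQ$ (your real-quadratic-with-$\ell$-split example is exactly the worry). But your proposed resolution, that ``the traces generate $F$'', is asserted and never proved; the appeal to ``producing a smaller abelian variety or contradicting simplicity of $A$'' is too vague to constitute an argument. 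So as written there is a genuine hole at precisely the point you flag.

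The paper closes this gap by a different mechanism. Rather than arguing at the single prime $\ell$, it varies the auxiliary prime. Fix any prime $p$ and any place $w\mid p$ of $F$. By Proposition~\ref{compsys} the $F$-valued Frobenius traces on $V_w(A)$ coincide with those on $V_{w_0}(A)$, hence lie in $\bQ$. Separately, since $E/K'$ is non-CM, Faltings' theorem gives that the image of $G_{K'}$ on $V_p(E)$ contains an open subgroup of $\GL_2(\bZ_p)$; comparing compatible systems transfers this big-image statement to $V_w(A)\otimes_{F_w}\ol\bQ_p$ restricted to $G_{K'}$. Now Lemma~\ref{ratlrep} applies---this is where the open-image hypothesis is actually used, and you never invoke it---and shows that $V_w(A)\otimes_{F_w}\ol\bQ_p$ is defined over $\bQ_p$, so $V_w(A)\cong V\otimes_{\bQ_p}F_w$ for some $V$ over $\bQ_p$. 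Then $\End_{\bQ_p[G_K]}(V_w(A))$ is simultaneously $F_w$ (by Proposition~\ref{Ffaltings}, using Faltings over finitely generated fields) and $\End_{\bQ_p}(F_w)$, forcing $F_w=\bQ_p$. Since this holds for \emph{every} $p$ and every $w\mid p$, every rational prime splits completely in $F$, hence $F=\bQ$.

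For what it is worth, your single-prime instinct can be salvaged, though not by the route you sketch. Once the $F$-valued traces are known to lie in $\bQ$, every $V_w(A)\otimes_{F_w}\ol\bQ_\ell$ for $w\mid\ell$ has the same character as $\rho$ and is absolutely irreducible, hence is isomorphic to $\rho$; thus $V_\ell(A)\otimes\ol\bQ_\ell\cong\rho^{\oplus g}$, so $\End_{\ol\bQ_\ell[G_K]}(V_\ell(A)\otimes\ol\bQ_\ell)\cong M_g(\ol\bQ_\ell)$ has dimension $g^2$, while Faltings gives $\End_{\bQ_\ell[G_K]}(V_\ell(A))=F\otimes\bQ_\ell$ of dimension $g$, whence $g=1$. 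This is shorter than the paper's route and avoids Lemma~\ref{ratlrep} and the big-image input (beyond its role inside Lemma~\ref{gl2type}); but it is not what you wrote, and the passage from ``rational traces'' to ``$F=\bQ$'' needs exactly this endomorphism-algebra dimension count rather than an unproved claim that traces generate $F$.
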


\begin{proof}
By Lemma~\ref{gl2type}, we can find an abelian variety $A/K$ with $\End(A) \otimes \bQ=F$ a number field of degree $\dim(A)$, and a place $w_0$ of $F$ such that $\rho \cong \ol{\bQ}_{\ell} \otimes_{F_{w_0}} V_{w_0}(A)$.

Choose an integral scheme $X$ of finite type over $\Spec(\bZ)$ with fraction field $K$ and a family of abelian varieties $\cA \to X$ extending $A$. The representation of $G_K$ on $V_w(A)$ factors through $\pi_1^{\rm et}(X)$, for all $w$. Since $V_{w_0}(A)$ satisfies ($\ast$), we can replace $X$ with a dense open subscheme such that the Frobenius elements in $\pi_1^{\rm et}(X)$ have rational traces on $V_{w_0}(A)$. By Proposition~\ref{compsys}, it follows that the Frobenius elements in $\pi_1^{\rm et}(X)$ have rational traces on $V_w(A)$ for all $w$.

By assumption, $\rho \vert_{G_{K'}} \cong \ol{\bQ}_{\ell} \otimes V_{\ell}(E)$ for some non-CM elliptic curve $E/K'$. As above, pick an integral scheme $X'$ of finite type over $\Spec(\bZ)$ with
fraction field $K'$ such that there is an elliptic curve $\cE\to X'$ extending $E$. Further shrinking
$X,X'$ we can assume $X'$ maps to $X$ inducing the inclusion $K\subset K'$.
 
 Pick a place $w\mid p$ of $F$. As we saw above, Frobenius elements of $\pi_1^{\rm et}(X)$ have equal traces on $\rho \cong \ol{\bQ}_{\ell} \otimes_{F_{w_0}} V_{w_0}(A)$ and $\ol{\bQ}_p \otimes_{F_w} V_w(A)$. Similarly, the traces of Frobenius elements of $\pi_1^{\rm et}(X')$ on $V_{\ell}(E)$ and $V_p(E)$ are equal. We thus see by Corollary~\ref{cheborep} that $\ol{\bQ}_p \otimes_{F_w} V_w(A)$ is isomorphic to $\ol{\bQ}_p \otimes_{\bQ_p} V_p(E)$ as representations of $\pi_1^{\rm et}(X')$. By the Tate conjecture proved by Faltings \cite[Theorem 1, page 211]{faltings}, the image of $G_{K'}$ in $\GL(V_p(E))$ contains an open subgroup of $\GL_2(\bZ_p)$.

It follows that the conditions of Lemma~\ref{ratlrep} are fulfilled for 
$V_w(A)\otimes_{F_w}\ol{\bQ}_p$, and so $V_w(A)\otimes_{F_w}\ol{\bQ}_p$ is defined over $\bQ_p$; that is, there exists some representation $V$ of $G_K$ over $\bQ_p$ such that $V_w(A)\otimes_{F_w}\ol{\bQ}_p \cong \ol{\bQ}_p \otimes_{\bQ_p} V$. Since $V_w(A)$ and $V\otimes_{\bQ_p} F_w$ have the same
character, and are irreducible, it follows that they are isomorphic. We thus see that $\End_{\bQ_p[G_K]}(V_w(A)) \cong \End_{\bQ_p}(F_w)$, where on the right side we are taking endomorphisms of $F_w$ as a vector space. By Proposition~\ref{Ffaltings} we have that $\End_{\bQ_p[G_K]}(V_w(A)) \cong F_w$, and so  $\End_{\bQ_p}(F_w)=F_w$, which implies $F_w=\bQ_p$. We thus see that all places of $F$ are split, and so $F=\bQ$. Thus $A$ is actually an elliptic curve, and the proof is complete.
\end{proof}

\begin{lemma}
Proposition~\ref{extn} holds if $K'/K$ is purely inseparable.
\end{lemma}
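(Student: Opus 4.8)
The plan is to reduce the purely inseparable case to the already-established separable case by exploiting the Frobenius. Since $K'/K$ is purely inseparable, we are necessarily in characteristic $p > 0$, and there is some power $q = p^m$ with $K' \subseteq K^{1/q}$, so in particular $K'{}^{q} \subseteq K$. The key point is that the $q$-power Frobenius $\Fr_q$ identifies $G_{K}$ with $G_{K^{1/q}}$ (a purely inseparable extension induces an isomorphism on absolute Galois groups, since it induces an equivalence on finite étale algebras), and under this identification the representation $\rho$ is pulled back to itself up to the Frobenius twist on the elliptic curve side. Concretely, if $E/K'$ is the non-CM elliptic curve with $\rho|_{G_{K'}} \cong \ol{\bQ}_\ell \otimes V_\ell(E)$, then the $q$-th power Frobenius descends $E$: there is an elliptic curve $E_0$ over $K$ with $E_0^{(q)} \cong E'$ for a suitable model, or more precisely $E$ is defined over $K'$ and $K' \cdot K^{1/q} $-considerations show $E$ is already defined over a separable extension after a Frobenius twist.

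First I would make the following precise reduction. Let $K'' = K'^{\mathrm{sep}\text{-closure inside }\ol K}$ — but in fact the cleanest route is: the extension $K'/K$ being purely inseparable, choose $q = p^m$ with $x^q \in K$ for all $x \in K'$. Then the relative Frobenius gives a $K$-algebra map $K' \to K$, $x \mapsto x^q$ is not a map but rather: consider the elliptic curve $E$ over $K'$ and form its $q$-th Frobenius twist. Because $K'^{1/q} \supseteq$ nothing useful, instead observe that $\Spec K' \to \Spec K$ and $\Spec K \to \Spec K^{1/q}$, and the composite $\Spec K^{1/q} \to \Spec K$ is (up to the abstract iso of Galois groups) the same as $\Spec K \to \Spec K$. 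Concretely: there is an elliptic curve $E^{(1/q)}$ over $K^{1/q}$ obtained by extracting $q$-th roots of the coefficients of a Weierstrass model of $E$ pushed to $K^{1/q} \supseteq K'$; its base change structure together with the canonical isomorphism $G_K \xrightarrow{\sim} G_{K^{1/q}}$ shows $\rho \cong \ol{\bQ}_\ell \otimes V_\ell(\mathcal{E}_0)$ for an elliptic curve $\mathcal{E}_0$ over $K$ itself, after checking the Tate-module identification is compatible with the Frobenius identification of fundamental groups.

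The main technical content I would then verify: (i) the Tate module $V_\ell(E)$, as a $G_{K'}$-representation, is canonically identified with $V_\ell(E^{(q)})$ as a $G_{K}$-representation under the Frobenius isomorphism $G_K \cong G_{K'}$ (this is standard: relative Frobenius $E \to E^{(q)}$ is purely inseparable of degree $q^2$, hence an isomorphism on $\ell$-adic Tate modules for $\ell \neq p$, and it is compatible with the Galois actions under the identification of the two Galois groups); (ii) the condition $(\ast)$ is insensitive to the purely inseparable modification of $K$, because a model $X$ over $\bZ$ with function field $K$ and one with function field $K'$ become isomorphic after inverting $p$ and passing to a suitable Frobenius thickening, so the rational-trace hypothesis transfers; and (iii) the resulting elliptic curve over $K$ has the right Tate module, so that $\rho$ itself (not merely its restriction) comes from an elliptic curve — but here we actually land on $\rho$ exactly, with no need to invoke the separable case's descent argument, precisely because the Frobenius identification is an isomorphism of Galois groups rather than a proper subgroup inclusion.

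The step I expect to be the main obstacle is a clean and rigorous formulation of point (i) together with the compatibility in (iii): one must be careful that "$E$ defined over the purely inseparable extension $K'$" really does produce, via Frobenius descent, an elliptic curve over $K$ whose $\ell$-adic representation is isomorphic to $\rho$ \emph{as a $G_K$-representation} and not just abstractly. The subtlety is that the identification $G_K \cong G_{K'}$ is canonical, but one should trace through that the non-CM hypothesis is preserved (it is, since Frobenius twists do not change the endomorphism algebra) and that no spurious twist by a character is introduced — unlike the separable case, where Lemma~\ref{lemma:rep} was needed to kill such a twist, here the determinant/cyclotomic-character bookkeeping should come out automatically because purely inseparable base change does not alter the cyclotomic character. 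If a residual character twist does appear, one finishes exactly as in the separable case by invoking Lemma~\ref{lemma:rep} and condition ($\ast$) again.
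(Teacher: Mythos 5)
Your proposal is correct and is essentially the paper's argument: reduce to a single Frobenius, form the Frobenius twist $E^{(p)}$ (which lands over $K$ since $(K')^p=K$), note that relative Frobenius $E\to E^{(p)}$ is purely inseparable and hence an isomorphism on $\ell$-adic Tate modules, and conclude using $G_K=G_{K'}$. Your exposition wanders a bit through $K^{1/q}$ before settling on the direct descent, but the three points you isolate at the end — Tate-module compatibility under the Frobenius identification, insensitivity of $(\ast)$, and the absence of any residual character twist — are exactly what the paper's two-sentence proof implicitly relies on.
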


\begin{proof}
It suffices to treat the case where $(K')^p = K$. Let $E/K'$ be the elliptic curve giving rise to $\rho \vert_{K'}$. Let $E^{(p)} = E \times_{K',F_0} K'$ where $F_0 \colon K' \to K'$ is the absolute Frobenius. Then $E^{(p)}$ is defined over $K$, and there is a canonical isogeny (relative Frobenius) $F \colon E \to E^{(p)}$ defined over $K'$ inducing an isomorphism on rational $\ell$-adic Tate modules. Thus $V_{\ell}(E^{(p)}) \vert_{G_{K'}} \cong \rho \vert_{G_{K'}}$ and so $V_{\ell}(E^{(p)}) \cong \rho$ since $K'/K$ is purely inseparable.
\end{proof}

Proposition~\ref{extn} in general follows from the previous two lemmas. We now prove a slightly different descent result.

\begin{proposition}\label{dcoef}
Let $k$ be a finite field, let $C/k$ be a proper smooth geometrically irreducible curve, and let $f \colon A \to U$ be a family of $g$-dimensional abelian varieties over a non-empty open subset $U$ of $C$ such that $\End(A) \otimes \bQ$ contains a number field $F$ of degree $g$. For a finite place $v$ of $F$, let $\cL_v$ be the $v$-adic Tate module of $A$. Assume that for all closed points $x$ of $U$, the trace of Frobenius at $x$ on $\cL_{v,x}$ belongs to $\bQ$. Also assume that there is some place of $C$ where $A$ does not have potentially good reduction. Then there exists an elliptic curve $E \to U$ such that $A$ is isogenous to $E \otimes \cO_F$.
\end{proposition}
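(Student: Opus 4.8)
The plan is to pass to the $2$-dimensional $\ol{\bQ}_\ell$-representation $\rho=\cL_v\otimes_{F_v}\ol{\bQ}_\ell$ of $\pi_1^{\et}(U)$ attached to the distinguished place $v$ (after fixing an embedding $F_v\hookrightarrow\ol{\bQ}_\ell$; recall $\cL_v$ is $2$-dimensional over $F_v$ by Proposition~\ref{compsys}), to show that $\rho$ descends to $\bQ_\ell$, and then to recognize the descended local system as the Tate module of an elliptic curve sitting inside $A$. \emph{Step 1 (totally toric reduction at the bad point).} Let $x_0$ be a place of $C$ at which $A$ does not have potentially good reduction. After a finite extension $A$ is semistable at $x_0$; let $T_0$ be the torus in the connected N\'eron special fibre, of dimension $t\ge 1$. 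The field $F$ acts on the N\'eron model, hence functorially on $T_0$, and faithfully since $1\in F$ acts as the identity; thus $F$ embeds into the $\bQ$-algebra of endomorphisms of the cocharacter lattice of $T_0$, a subalgebra of $M_t(\bQ)$, so $[F:\bQ]$ divides $t$. Together with $t\le\dim A=[F:\bQ]$ this forces $t=\dim A$: the reduction at $x_0$ is totally toric. Since the monodromy operator $N$ at $x_0$ on $V_\ell(A)$ has $\bQ_\ell$-rank $t=\dim A=\sum_{w\mid\ell}[F_w:\bQ_\ell]$, commutes with $F$, and hence decomposes as $\bigoplus_w N_w$ with $\rank_{F_w}N_w\in\{0,1\}$, we get $N_w\ne 0$ for every $w\mid\ell$. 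In particular some power $h$ of a generator of the geometric inertia at $x_0$ has $\rho(h)$ a nontrivial unipotent; note $h\in\pi_1^{\et}(U_{\ol k})$, which is normal in $\pi_1^{\et}(U)$.

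\emph{Step 2 (irreducibility and descent to $\bQ_\ell$).} The Tate module of an abelian variety is a semisimple $\pi_1^{\et}(U)$-representation, and a semisimple $2$-dimensional representation whose image contains a nontrivial unipotent element is not a direct sum of characters; hence $\rho$ is irreducible, and then Lemma~\ref{stupid1} (with $H=\pi_1^{\et}(U_{\ol k})$) shows $\rho|_{\pi_1^{\et}(U_{\ol k})}$ is irreducible, so $\rho$ is absolutely irreducible. By hypothesis $\tr\rho$ is $\bQ$-valued on Frobenius elements, which are dense (Proposition~\ref{chebo}), so $\tr\rho(g)\in\bQ_\ell$ for all $g$ by continuity. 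An absolutely irreducible $2$-dimensional representation with $\bQ_\ell$-valued traces is defined, up to an obstruction in $\mathrm{Br}(\bQ_\ell)[2]$, either over $\bQ_\ell$ or over the quaternion division algebra over $\bQ_\ell$; the latter is impossible because a quaternion division algebra contains no nonzero nilpotents while $\rho(h)-1$ is one. Hence there is a rank-two $\bQ_\ell$-local system $W$ on $U$ with $W\otimes_{\bQ_\ell}\ol{\bQ}_\ell\cong\rho$.

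\emph{Step 3 (the elliptic curve and conclusion).} Since $W\otimes_{\bQ_\ell}F_v$ and $\cL_v$ are both $F_v$-forms of the absolutely irreducible $\rho$, and such forms are unique (Hilbert 90), $\cL_v\cong W\otimes_{\bQ_\ell}F_v$; in particular $W$ occurs in $V_\ell(A)$. Let $B$ be a simple isogeny factor of $A$ in whose $\ell$-adic Tate module $W$ occurs. Because $W$ is absolutely irreducible and $W(h)$ is a nontrivial unipotent, $B$ has neither a reducible Tate module nor everywhere potentially good reduction; a short case analysis using Albert's classification of endomorphism algebras together with the fact that CM abelian varieties and ``fake'' abelian varieties (those with a quaternion or larger division endomorphism algebra) have everywhere potentially good reduction then shows that $B$ is a non-isotrivial elliptic curve $E$ with $V_\ell(E)\cong W$. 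Finally, by the Tate conjecture for abelian varieties over function fields (Zarhin, Mori), $\Hom_U(E,A)\otimes\bQ\cong\Hom_{\bQ_\ell[\pi_1^{\et}(U)]}(W,V_\ell(A))$ has $\bQ$-dimension equal to the multiplicity $m$ of $W$ in the semisimple module $V_\ell(A)$; the $\cO_F$-action on $A$ makes this a vector space over $F$, so $[F:\bQ]\mid m$, while $2m\le\dim_{\bQ_\ell}V_\ell(A)=2[F:\bQ]$ gives $m\le[F:\bQ]$. Hence $m=[F:\bQ]$, so $V_\ell(A)\cong W^{\oplus[F:\bQ]}\cong V_\ell(E^{[F:\bQ]})$, and $A$ is isogenous to $E^{[F:\bQ]}\cong E\otimes\cO_F$ by the Tate conjecture once more.

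\emph{Main obstacle.} I expect the delicate points to be Step 1 — extracting totally toric reduction from the bare existence of a field $F$ of the right degree, which relies on the functoriality of the toric part and a dimension count — and the verification in Step 3 that a simple factor carrying $W$ is forced to be a non-isotrivial elliptic curve; the remaining ingredients (semisimplicity of Tate modules, Chebotarev, the Brauer--Hilbert~90 bookkeeping in Steps 2--3, and the concluding multiplicity count) should be routine once these are secured. A minor nuisance is keeping the weight/normalization conventions consistent so that ``trace of Frobenius'' denotes the same quantity on $\cL_v$ and on $V_\ell(E)$.
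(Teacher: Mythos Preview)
Your route differs from the paper's. The paper never isolates a simple factor or appeals to Albert's classification: it switches to a rational prime $\ell$ that splits completely in $F$, uses Proposition~\ref{compsys} together with the rational-trace hypothesis to see that all the $\cL_w$ for $w\mid\ell$ are isomorphic $\bQ_\ell$-sheaves, and concludes via the isogeny theorem that $\dim_{\bQ}(\End(A)\otimes\bQ)\ge g^2$. Your Step~1 toric-rank argument then gives the reverse inequality via $\End(A)\otimes\bQ\hookrightarrow M_g(\bQ)$, so $\End(A)\otimes\bQ=M_g(\bQ)$ and a rank-one idempotent cuts out $E$. Your Steps~1 and~2 and the closing multiplicity count are correct, and the unipotent/Brauer trick for descending $\rho$ to $\bQ_\ell$ is a pleasant alternative to the split-prime device; the trouble is the middle of Step~3.

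The Albert ``trichotomy'' you invoke is not exhaustive. A simple isogeny factor $B$ with $\End(B)\otimes\bQ=F_0$ a number field of degree $\dim B>1$ (for instance a simple abelian surface with real multiplication) is neither an elliptic curve, nor CM in the degree-$2\dim B$ sense, nor ``fake'' in your sense; and the toric embedding $F_0\hookrightarrow M_{\dim B}(\bQ)$ is unobstructed, so such a $B$ can perfectly well have a place of bad (totally toric) reduction. Nothing you wrote excludes this case. The repair is to use the rational-trace hypothesis \emph{again}: if $W\cong V_{w_0}(B)$ for some place $w_0$ of $F_0$ over $\ell$, then Proposition~\ref{compsys} forces every $V_w(B)$ to have the same rational Frobenius traces as $W$, so by Corollary~\ref{cheborep} they are all isomorphic and $\End_{\pi_1}(V_\ell(B))$ contains a matrix algebra of rank $\dim B$, contradicting $\End(B)\otimes\bQ_\ell=F_0\otimes\bQ_\ell$. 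With this inserted your argument goes through---but observe that the inserted step is exactly the paper's key idea, applied to $B$ rather than to $A$, so the detour through a simple factor and Albert's classification does not in the end avoid it.
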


\begin{proof}
Let $\ell$ be a rational prime that splits completely in $F$. Then the $\ell$-adic Tate module $\cL_{\ell}$ of $A$ decomposes as $\bigoplus_{v \mid \ell} \cL_v$. The $\cL_v$ form a compatible system with coefficients in $F$, so for each closed point $x \in U$ there exists $\alpha \in F$ such that the trace of Frobenius at $x$ on $\cL_{v,x}$ is the image of $\alpha$ in $F_v$. By our assumptions, $\alpha$ is a rational number, and so if $v,w \mid \ell$ then the traces of Frobenius on $\cL_{v,x}$ and $\cL_{w,x}$ are the same element of $\bQ_{\ell} \subset F_v, F_w$. It follows that the characters of $\cL_v$ and $\cL_w$ are equal at Frobenius elements, and so $\cL_v \cong \cL_w$. Thus $\dim(\End(\cL_{\ell})) \ge g^2$. By Faltings' isogeny theorem, it follows that $\dim(\End(A) \otimes \bQ) \ge g^2$.

Let $C' \to C$ be a cover such that $A$ has semi-stable reduction, and let $x$ be a point at which $A'$ (the pullback of $A$) has bad reduction. Let $\cA'$ be the N\'eron model of $A'$ over $C'$, and let $T$ be the torus quotient of the identity component of $\cA'_x$. The dimension $h$ of $T$ is at least~1, and at most~$g$. Under the map $\End(A) \otimes \bQ \to \End(T) \otimes \bQ \subset M_h(\bQ)$, the field $F$ must inject, and so $h=g$. Thus $T$ is the entire identity component of $\cA'_x$, and so the map $\End(A) \otimes \bQ \to \End(T) \otimes \bQ \subset M_g(\bQ)$ is injective. Combined with the preivous paragraph, we find that $\dim(\End(A) \otimes \bQ) = g^2$, and so the map $\End(A) \otimes \bQ \to M_g(\bQ)$ is an isomorphism. The statement now follows by projecting under an idempotent.
\end{proof}

\section{Drinfeld's work on global Langlands} \label{s:drinfeld}

\begin{proposition}
\label{drinfeld}

Let $k$ be a finite field, and $C/k$ be a smooth proper geometrically irreducicble curve. Let $L$ be an irreducible  rank 2 lisse $\ol{\bQ}_{\ell}$-sheaf over a non-empty open subset $U\subset C$, such that:
\begin{enumerate}
\item There is an isomorphism $\lw^2 L \cong \ol{\bQ}_{\ell}(1)$.
\item For every closed point $x$ of $C$, the trace of Frobenius on $L_x$ is a rational number.
\item There exists a point $x$ of $C_{\ol{k}}$ at which $L_{\ol{k}}$ does not have potentially good reduction.
\end{enumerate}

Then there exists an elliptic curve $f\colon E\to U$ such that 
$\rR^1f_{*}(\ol{\bQ}_{\ell})\cong L$.

\end{proposition}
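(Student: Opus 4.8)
The plan is to pass to the automorphic side, use Drinfeld's proof of the global Langlands correspondence for $\GL_2$ over $K = k(C)$ to realize $L$ inside the cohomology of a Drinfeld modular curve, extract a $\GL_2$-type abelian variety, and then invoke Proposition~\ref{dcoef} to replace it by an elliptic curve.

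First I would translate the hypotheses to the automorphic side. After twisting $L$ by an unramified character so that $\lw^2 L$ becomes trivial (this twist is harmless and is undone at the end), $L$ is an irreducible continuous representation $\pi_1^{\et}(U) \to \GL_2(\ol{\bQ}_\ell)$ with trivial determinant, and Drinfeld's theorem attaches to it a cuspidal automorphic representation $\pi$ of $\GL_2(\bA_K)$. The content of hypothesis (c) is local: if $v$ is the place of $C$ lying under the point $x$ of $C_{\ol k}$, then by Grothendieck's $\ell$-adic monodromy theorem $L$ fails to have potentially good reduction at $v$ precisely when the local monodromy operator of $L$ at $v$ is nonzero, i.e.\ $L$ is special at $v$; equivalently $\pi_v$ is a twist of the Steinberg representation. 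Since $\pi$ is cuspidal and special at the place $v$, Drinfeld's construction --- via rank-$2$ elliptic modules rather than general shtukas --- realizes the Galois representation attached to $\pi$ in the degree-one \'etale cohomology of a Drinfeld modular curve of suitable level: concretely, the $\pi$-isotypic part of the Jacobian of its smooth compactification is an abelian variety $A/K$ of $\GL_2$-type, meaning that $\End(A) \otimes \bQ$ contains a number field $F$ with $[F:\bQ] = \dim A$, and $V_{w_0}(A) \otimes_{F_{w_0}} \ol{\bQ}_\ell \cong L$ for some place $w_0 \mid \ell$ of $F$. Since $L$ is lisse, hence unramified, at every point of $U$, the criterion of N\'eron--Ogg--Shafarevich shows $A$ has good reduction there, so $A$ spreads out to an abelian scheme $A \to U$.

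It then remains to apply Proposition~\ref{dcoef}. Its hypotheses all hold: $A \to U$ is a family of $g$-dimensional abelian varieties (with $g = \dim A$) over a nonempty open of the proper smooth curve $C/k$, and $\End(A) \otimes \bQ$ contains the degree-$g$ number field $F$; the $v$-adic Tate modules $\cL_v$ of $A$ form a compatible system with coefficients in $F$, and since $\cL_{w_0} \otimes \ol{\bQ}_\ell \cong L$ has rational traces of Frobenius by hypothesis (b), the common characteristic polynomials of Frobenius lie in $\bQ[T]$, so every $\cL_v$ has rational traces of Frobenius; and $A$ fails to have potentially good reduction at $v$, as observed above. Proposition~\ref{dcoef} then produces an elliptic curve $f \colon E \to U$ with $A$ isogenous to $E \otimes \cO_F$, and hence $V_{w_0}(A) \cong V_\ell(E) \otimes_{\bQ_\ell} F_{w_0}$ as $G_K$-modules. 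Undoing the initial twist, $\rR^1 f_*(\ol{\bQ}_\ell)$ and $L$ are semisimple $\ol{\bQ}_\ell$-sheaves on $U$ with equal traces of Frobenius at every closed point, so they are isomorphic by Corollary~\ref{cheborep}, which is what we want.

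The substantive step is the middle one: extracting from Drinfeld's work a $\GL_2$-type abelian variety over $U$ that realizes $L$. This bundles together (i) the fact that an irreducible rank-$2$ lisse sheaf with the prescribed determinant is automorphic, (ii) the local--global compatibility identifying the special places, and (iii) the Eichler--Shimura-type statement that a cuspidal representation which is special at a place is cut out of the cohomology of a Drinfeld modular curve, along with the bookkeeping needed to see that the resulting abelian scheme lives over $U$ and has the expected Tate module. Everything after that point is formal, using Propositions~\ref{compsys}, \ref{Ffaltings} and~\ref{dcoef} together with Corollary~\ref{cheborep}.
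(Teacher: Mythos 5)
Your proposal takes essentially the same route as the paper: pass to the automorphic side via Drinfeld's theorem, observe that hypothesis (c) forces $\pi$ to be special at the place under $x$, invoke Drinfeld's work on elliptic modules to produce a $\GL_2$-type abelian scheme over $U$ realizing $L$, and then apply Proposition~\ref{dcoef} to descend the coefficient field to $\bQ$. You have filled in more of the surrounding detail (the spreading-out over $U$ via N\'eron--Ogg--Shafarevich, verification of the hypotheses of Proposition~\ref{dcoef}, and the final Chebotarev comparison), which is reasonable; the one thing I would tidy up is the opening twist making $\lw^2 L$ trivial --- it is unnecessary for Drinfeld's input, it introduces irrational Frobenius traces if $|k|$ is not a square, and since the Tate module of the resulting abelian variety necessarily has cyclotomic determinant, the representation it realizes is the original $L$ rather than the twisted one, so tracking the twist through the argument as you describe it would require more care than you give.
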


\begin{proof}

By Drinfeld's theorem (\cite[Main Thm, remark 5]{D1}, see also \cite{D3}) there
is a cuspidal automorphic representation $\pi$ of $\GL_2(\bA_{k(C)})$ which is 
compatible with $L$. Since inertia at $x$ does not have finite order, $\pi$
must be special at $x$. It follows by another theorem of Drinfeld \cite[Thm.~1]{D2}
that there exists a number field $E$ and a $\GL_2(E)$-type abelian variety $A$
over $U$ which is compatible with $\pi$ and thus also with $L$, in the sense that
the $\ell$-adic Tate module $L'$ of $A$ is isomorphic with $L$ when tensored up
to $\ol{\bQ}_{\ell}$; that is, $L'\otimes_E \ol{\bQ}_{\ell}\cong L$. 
By Proposition~\ref{dcoef}, we may may take $A$ to be an elliptic curve.

\end{proof}

\begin{remark}

By following Drinfeld's proof carefully, one may directly see that we can take
$E$ to be the field generated by the Frobenius traces of $L$, and is thus $\bQ$,
which can replace the use of Proposition~\ref{dcoef}.

\end{remark}

\section{Mapping spaces} \label{s:mapping}

Let $S$ be a noetherian scheme. For $i=1,2$, let $C_i$ be a proper smooth scheme over $S$ with geometric fibers irreducible curves of genus $\ge 2$, let $Z_i$ be a closed subscheme of $C_i$ that is a finite union of sections $S \to C_i$, and let $U_i$ be the complement of $Z_i$ in $C_i$. Fix $d \ge 1$.

\begin{proposition}
\label{prop:mapping}
There exists a scheme $\cM$ of finite type over $S$ and a map $\phi \colon (U_1)_{\cM} \to (U_2)_{\cM}$ with the following property: if $k$ is a field, $s \in S(k)$, and $f \colon U_{1,s} \to U_{2,s}$ is a map of curves over $k$ of degree $d$ (meaning the corresponding function field extension has degree $d$), then there exists $t \in \cM(k)$ over $s$ such that $f=\phi_t$.
\end{proposition}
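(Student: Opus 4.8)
The plan is to construct $\cM$ as a locally closed subscheme of a Hilbert scheme parametrizing graphs of such maps, and then cut down to the locus where the graph really is the graph of a morphism $U_1 \to U_2$ of the prescribed degree. First I would recall that since $C_1, C_2$ are proper and smooth over $S$ with one-dimensional fibers, a morphism $f \colon U_{1,s} \to U_{2,s}$ over a field $k$ of degree $d$ extends uniquely to a finite morphism $\ol{f} \colon C_{1,s} \to C_{2,s}$ of degree $d$ (valuative criterion of properness, applied at the finitely many points of $Z_{1,s}$, using that $C_{1,s}$ is a smooth curve so local rings at closed points are DVRs). Thus it is equivalent to parametrize finite degree-$d$ morphisms $C_1 \to C_2$ carrying $U_1$ into $U_2$, i.e.\ carrying $Z_1$ into $Z_2$ (scheme-theoretically, after possibly enlarging $Z_2$ harmlessly — but since $Z_2$ is a union of sections this is a closed condition). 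The graph of $\ol f$ is a closed subscheme $\Gamma_{\ol f} \subset C_1 \times_S C_2$, finite and flat of degree $1$ over $C_1$, hence a closed subscheme with Hilbert polynomial determined by $d$ and the fixed polarizations on $C_1$ and $C_2$.

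Concretely, I would proceed as follows. Fix relatively ample line bundles on $C_1/S$ and $C_2/S$, giving a relatively ample bundle on $C_1\times_S C_2$, and let $H = \mathrm{Hilb}^P(C_1\times_S C_2 / S)$ be the Hilbert scheme for the Hilbert polynomial $P$ of a graph of a degree-$d$ map; $H$ is projective over $S$ by Grothendieck, in particular of finite type over $S$. On $H$ we have the universal closed subscheme $\cZ \subset (C_1\times_S C_2)_H$. Now impose the following conditions, each of which defines a locally closed (in fact, the first is open and the rest are closed or open) subscheme of $H$: (i) the projection $\cZ \to (C_1)_H$ is an isomorphism — by standard results (e.g.\ EGA IV or the fact that being an isomorphism is representable by an open-and-closed condition on the base for a proper morphism of finite presentation with a compatible flatness hypothesis) this is a locally closed condition; over it, $\cZ$ is the graph of a morphism $g \colon (C_1)_H \to (C_2)_H$; (ii) the second projection $\cZ \to (C_2)_H$ is finite of degree $d$ — finiteness is closed-and-open given properness, and the degree is locally constant, so this carves out a union of connected components; (iii) $g$ carries $(Z_1)_H$ into $(Z_2)_H$ — a closed condition, since it says the composite $(Z_1)_H \hookrightarrow (C_1)_H \xrightarrow{g} (C_2)_H$ factors through the closed subscheme $(Z_2)_H$, equivalently the pullback of the ideal of $Z_2$ vanishes. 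Let $\cM$ be the resulting locally closed subscheme of $H$; it is of finite type over $S$, and it carries a universal map $g^{\univ}\colon (C_1)_{\cM}\to (C_2)_{\cM}$ restricting to $\phi \colon (U_1)_{\cM} \to (U_2)_{\cM}$.

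It remains to check the moduli property. Given $s\in S(k)$ and $f\colon U_{1,s}\to U_{2,s}$ of degree $d$, extend to $\ol f\colon C_{1,s}\to C_{2,s}$ as above; since $\ol f$ is a finite degree-$d$ morphism between proper smooth curves with $\ol f(Z_{1,s})\subseteq Z_{2,s}$, its graph $\Gamma_{\ol f}\subset (C_1\times_S C_2)_s$ is a closed subscheme flat over $s$ with the correct Hilbert polynomial $P$, hence defines a point $t\in H(k)$; by construction $t$ satisfies conditions (i)–(iii), so $t\in\cM(k)$, it lies over $s$, and $g^{\univ}_t=\ol f$, whence $\phi_t=f$. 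The genus $\ge 2$ hypothesis is not strictly needed for the construction but guarantees that there are no positive-dimensional automorphism groups, so that $\cM$ is reasonably behaved (e.g.\ the map $\cM\to S$ has fibers that are finite over the relevant Hom-schemes); I would remark on this but not belabor it.

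The main obstacle I anticipate is the careful verification that ``the first projection $\cZ\to(C_1)_H$ is an isomorphism'' and ``$\cZ\to(C_2)_H$ is finite of degree $d$'' are genuinely locally closed (resp.\ open-and-closed) conditions on $H$, with the universal map $g$ then existing over that locus — this is where one must invoke the precise representability statements (flattening stratification, the locus where a proper morphism is an isomorphism or is finite) rather than hand-wave. A cleaner alternative, which I would adopt if the Hilbert-scheme bookkeeping gets unwieldy, is to instead build $\cM$ directly as the Hom-scheme $\underline{\Hom}_S(C_1, C_2)$, which Grothendieck constructs as an open subscheme of $\mathrm{Hilb}(C_1\times_S C_2/S)$ precisely via the graph construction, then take the open-and-closed piece of degree $d$ and impose the closed condition $g(Z_1)\subseteq Z_2$; this repackages exactly the same argument but lets me cite the Hom-scheme existence theorem as a black box.
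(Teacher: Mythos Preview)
Your overall strategy---build the Hom-scheme $\underline{\Hom}^d_S(C_1,C_2)$ via Hilbert schemes and then cut down---is sound and is essentially what the paper does for its scheme $\cA$. However, condition~(iii) is the wrong condition, and this breaks the argument in two places.

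The condition you need is that $g$ carries $U_1$ into $U_2$, i.e.\ $g^{-1}(Z_2)\subseteq Z_1$. You have instead imposed $g(Z_1)\subseteq Z_2$, i.e.\ $Z_1\subseteq g^{-1}(Z_2)$, which is the reverse inclusion. These are not equivalent: a point $P_i\in Z_1$ may very well map into $U_2$ (nothing prevents this), and conversely a point of $U_1$ may map into $Z_2$ even if all of $Z_1$ does. Consequently (a) your universal $g$ does \emph{not} restrict to a map $(U_1)_{\cM}\to(U_2)_{\cM}$, so $\phi$ is not defined; and (b) in your moduli check, the assertion ``$\ol f(Z_{1,s})\subseteq Z_{2,s}$'' is false in general, so a given $f\colon U_{1,s}\to U_{2,s}$ need not yield a point of your $\cM$.

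The correct condition $g^{-1}(Z_2)\subseteq Z_1$ is more delicate to impose, because it is a constraint on a preimage rather than an image. One can argue directly that it is closed: $W:=g^{-1}((Z_2)_{\cA})$ is finite flat over $\cA$, so $W\setminus (Z_1)_{\cA}\to\cA$ is flat of finite presentation, hence open, and the desired locus is the (closed) complement of its image. The paper takes a different, more explicit route: it introduces \emph{ramification data} $\rho$ specifying, for each $Q_j\in Z_2$, which $P_i\in Z_1$ lie over it and with what ramification index $e_i$, subject to $\sum_{k_i=j}e_i=d$. This last equality is exactly what forces $g^{-1}(Q_j)\subseteq Z_1$ (the full degree $d$ is already accounted for among the $P_i$). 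The space $\cM$ is then a finite disjoint union $\coprod_\rho \cA\times_{\cB}\cC_\rho$, where $\cC_\rho$ parametrizes the prescribed local behavior on nilpotent thickenings of the $P_i$. Either approach works once the condition is stated correctly; the paper's stratification has the advantage that the local pieces $\cC_{P,Q,e}$ are visibly of finite type and no flatness argument is needed.
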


\begin{proof}
Let $P$ be the image of a section $S \to C_1$. Define $\wt{P}$ to be the $d$th nilpotent thickening of $P$. Precisley, if $P$ is defined by the ideal sheaf $\cI_P$ then $\wt{P}$ is defined by $\cI^d_P$. Let $Q$ be the image of a section $S \to C_2$, and define $\wt{Q}$ similarly. Let $0 \le e \le d$ be an integer. For an $S$-scheme $S'$, let $\cC_{P,Q,e}(S')$ be the set of maps $f \colon \wt{P}_{S'} \to \wt{Q}_{S'}$ such that $f^*(\cI_Q) \subset \cI_P^e$. As these are finite schemes over $S$, this functor is represented by a scheme $\cC_{P,Q,E}$ of finite type over $S$.

Write $Z_1=\coprod_{i=1}^n P_i$ and $Z_2=\coprod_{j=1}^m Q_j$. By a \emph{ramification datum} we mean a tuple $\rho=(\rho_1, \ldots, \rho_n)$ where each $\rho_i$ is either null (denoted $\emptyset$), or a pair $(k_i, e_i)$, where $1 \le k_i \le m$ and $0 \le e_i \le d$, such that the following condition holds: for any $1 \le j \le m$, we have $\sum_{k_i=j} e_i = d$ (the sum taken over $i$ for which $\rho_i \ne \emptyset$). Obviously, there are only finitely many ramification data. For a ramification datum $\rho$, define $\cC_{\rho} = \prod_{\rho_i \ne \emptyset} \cC_{P_i, Q_{k_i}, e_i}$. Finally, define $\cC=\coprod_{\rho} \cC_{\rho}$, where the union is taken over all ramification data $\rho$.

For an $S$-scheme $S'$, let $\cA(S')$ be the set of morphisms $(C_1)_{S'} \to (C_2)_{S'}$ having degree $d$ in each geometric fiber. The theory of the Hilbert scheme shows that $\cA$ is represented by a scheme of finite type over $S$. For $1 \le i \le n$, let $\cB_i(S')$ be the set of all maps $(\wt{P}_i)_{S'} \to (C_2)_{S'}$. This is easily seen to be a scheme of finite type over $S$. Let $\cB=\coprod_{i=1}^n \wt{\cB}_i$.

We have restriction maps $\cA \to \cB$ and $\cC \to \cB$. Define $\cM$ to be the fiber product $\cA \times_{\cB} \cC$, which is a scheme of finite type over $S$. We can write $\cM=\coprod_{\rho} \cM_{\rho}$, where $\cM_{\rho}=\cA \times_{\cB} \cC_{\rho}$. If $s \in S(k)$ then $\cM_{\rho,s}(k)$ is the set of degree $d$ maps $f \colon C_{1,s} \to C_{2,s}$ satisfying the following conditions at the $P_i$: if $\rho_i=\emptyset$ then there is no condition at $P_i$; otherwise, $f(P_i)=Q_{k_i}$ and the ramification index $e(P_i \mid Q_{k_i})$ is at least $e_i$. (In fact, the ramification index is exactly $e_i$, since the total ramification is $d$ and the $e_i$ with $k_i=j$ add up to $d$.) It is clear that such a map carries $U_{1,s}$ into $U_{2,s}$, and that any map $U_{1,s} \to U_{2,s}$ of degree $d$ comes from a point of some $\cM_{\rho,s}$. Thus every map $f \colon U_{1,s} \to U_{2,s}$ comes from some $k$-point of $\cM_s$. Finally, note that the universal map $(C_1)_{\cM} \to (C_2)_{\cM}$ carries $(U_1)_{\cM}$ to $(U_2)_{\cM}$, as this can be checked at field points of $\cM$. This proves the proposition.
\end{proof}

\section{Proof of Theorem~\ref{mainthm}} \label{s:proof}

Keep notation as in Theorem~\ref{mainthm}, and put $S=\Spec(\cO_K[1/N])$.

\begin{lemma} \label{irredsubgp}
The restriction of $L$ to any open subgroup of $\pi_1^{\et}(U_K)$ is irreducible.
\end{lemma}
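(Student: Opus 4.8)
The plan is to use the hypotheses (a)--(d) to show that $L$ restricted to a suitable normal subgroup contains a nontrivial unipotent, and then invoke Lemma~\ref{stupid1}. Here is the key point: hypothesis (d) says that the inertia subgroup $I_x$ at some point $x$ of $C_{\ol K}$ does not act through a finite quotient on the representation $\rho$ attached to $L_{\ol K}$. The wild inertia acts through a pro-$p$ group, and a pro-$p$ subgroup of $\GL_2(\ol{\bQ}_\ell)$ (with $p \ne \ell$) has finite image, so after adjusting we may assume the tame inertia, topologically generated by a single element $T$, acts with infinite image. An element of $\GL_2(\ol{\bQ}_\ell)$ of infinite order that is conjugate (via the decomposition group, through the cyclotomic character) to its own $\chi(\sigma)$-th power must, by the argument in Lemma~\ref{charram} applied to eigenvalues, have a repeated eigenvalue; hence $\rho(T)$ is (up to scalar and passing to a power) unipotent, and by infinitude of the image it is \emph{nontrivially} unipotent. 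Thus some element of the geometric fundamental group $\pi_1^{\et}(U_{\ol K})$, which is normal in $\pi_1^{\et}(U_K)$, maps to a nontrivial unipotent under $\rho$.

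Now to handle an \emph{arbitrary} open subgroup $G'$ of $\pi_1^{\et}(U_K)$: first I would observe that it suffices to treat open \emph{normal} subgroups, since any open subgroup contains one. Given an open normal $G' \trianglelefteq \pi_1^{\et}(U_K)$, I want to produce inside $G'$ an element whose image under $\rho$ is a nontrivial unipotent, and then apply Lemma~\ref{stupid1} with $G = \pi_1^{\et}(U_K)$, $H = G'$. The element $\rho(T)$ above is unipotent; since $T$ generates a procyclic group, some positive power $T^N$ lies in $G' \cap \langle T\rangle$ (the subgroup $G' \cap \overline{\langle T \rangle}$ is open in $\overline{\langle T\rangle} \cong \wh{\bZ}$ or $\prod_{q\ne p}\bZ_q$, hence of finite index), and $\rho(T^N) = \rho(T)^N$ is again unipotent, and still nontrivial because $\rho(T)$ has infinite order. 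Hence $G'$ contains an element with nontrivial unipotent image, so by Lemma~\ref{stupid1}, $L\vert_{G'}$ is irreducible. Since $L$ itself is irreducible by hypothesis, and the intersection with $\pi_1^{\et}(U_{\ol K})$ is automatically handled the same way (it contains $T^N$), the lemma follows.

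The main obstacle is the first paragraph: carefully arguing that hypothesis (d) forces $\rho$ of (tame) inertia to be nontrivial unipotent rather than, say, a nonsemisimple element with distinct eigenvalues or a semisimple element of infinite order. The resolution is that the conjugacy relation $\rho(T) \sim \rho(T)^{\chi(\sigma)}$ in the decomposition group constrains the multiset of eigenvalues to be stable under raising to the power $\chi(\sigma)$ for all $\sigma \in G_{K'}$ (after replacing $K$ by a finite extension so $x$ is rational); since $\chi(G_{K'})$ is infinite in $\bZ_\ell^\times$, a pair of eigenvalues $\{\lambda,\mu\}$ with $\lambda^{\chi(\sigma)}\mu^{-\chi(\sigma)}$ ranging over an infinite set cannot be permuted among a finite set unless $\lambda = \mu$; then $\det = \lambda^2$ is a root of unity by (a) (cyclotomic determinant restricted to inertia is trivial up to finite order, cf.\ Lemma~\ref{charram}), so $\rho(T)$ is a root of unity times a unipotent, and a suitable power is unipotent; it is nontrivial precisely because $\rho(T)$ has infinite order. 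One must also check the reduction to tame inertia (killing wild inertia, a pro-$p$ group, via passage to a finite cover of $C$ if necessary, which is harmless by the reductions already granted in the proof strategy) — but note that for the statement of this lemma no cover is needed, as we only need \emph{existence} of one nontrivial unipotent in the image of $\pi_1^{\et}(U_{\ol K})$, and that is a property of $L$ over $\ol K$ directly.
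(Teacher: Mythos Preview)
Your argument is correct and establishes the lemma, but it proceeds along a different route from the paper's. The paper argues by contradiction: if $L\vert_H$ were reducible for some open normal $H$, then (by the uniqueness-of-invariant-line argument underlying Lemma~\ref{stupid1}, i.e.\ Clifford theory) it would be semisimple, hence a sum of two characters of $H=\pi_1^{\et}(U')$ for some finite cover $U'\to U$; but by Lemma~\ref{charram} such characters have finite ramification at every geometric point, contradicting hypothesis~(d). You instead argue directly: hypothesis~(d) forces the inertial monodromy at $x$ to be quasi-unipotent of infinite order, hence some power is a nontrivial unipotent lying in any prescribed open subgroup, and then Lemma~\ref{stupid1} applies in the forward direction.

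The two arguments are essentially dual. The paper's is shorter because it invokes Lemma~\ref{charram} as a black box, whereas you unpack the mechanism of its proof (the conjugacy $\rho(T)\sim\rho(T)^{\chi(\sigma)}$ inside the decomposition group) to extract the stronger conclusion that $\rho(T)$ is quasi-unipotent. Your approach has the merit of making the local monodromy at the bad point explicit. One small cleanup: since we are working over $\ol{K}$ in characteristic zero, the inertia group at a point of $C_{\ol{K}}$ is already procyclic $\cong\wh{\bZ}$ with no wild part, so your discussion of killing wild inertia (and the aside about passing to a cover) is unnecessary here.
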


\begin{proof}
Suppose not. Then there exists an open normal subgroup $H$ of $G=\pi_1^{\et}(U_K)$ such that $L \vert_H$ is reducible. It is semi-simple by Lemma~\ref{stupid1}, and therefore a sum of two characters. Since characters have finite ramficiation by Lemma~\ref{charram}, we have contradicted assumption (d).
\end{proof}

Choose $r \gg 0$ so that $X=X(\ell^r)$ has genus at least~2 and $Y=Y(\ell^r)$ is a fine moduli space; in fact, $r=3$ suffices for any $\ell$. We replace $\cL$ with a rank two $\cO_E$-sheaf, where $E/\bQ_{\ell}$ is a finite extension. Proposition~\ref{extn} and Lemma~\ref{irredsubgp} show that is suffices to prove the proposition after passing to a finite cover of $C$. By passing to an appropriate cover, we can therefore assume that $\cL/\ell^r$ is trivial. The image of the Galois representation $\rho \colon \pi_1^{\et}(\cU) \to \GL_2(\cO_E)$ has order $M \cdot \ell^{\infty}$ (in the sense of profinite groups) for some positive integer $M$. By enlarging $N$, we can assume that $M \cdot \ell \mid N$ and that the complement of $U$ in $C$ spreads out to a divisor on $\cC$ that is smooth over $S$

We make the following definitions:
\begin{itemize}
\item  Let $D$ be an integer greater than $(g(C)-1)/g(X)-1)$, where $g(-)$ denotes genus.
Let $\cM_d$ be the space of maps $\cU \to Y$ of degree $d$, in the sense of Proposition~\ref{prop:mapping}, and let $\cM=\coprod_{d=1}^D \cM_d$.
\item Let $\cT$ be the (integral) $\ell$-adic Tate module of the universal elliptic curve over $Y$, and let $\cT'=\cT \otimes \cO_E$. Also let $\cL_n=\cL/\ell^n \cL$ and let $\cT'_n=\cT'/\ell^n \cT'$.
\item Let $\wt{\cM}_n$ be the moduli space of pairs $(f, \psi)$ where $f \in \cM$ and $\psi$ is an isomorphism of $\cO_E$-sheaves $f^*(\cT_n) \to \cL_n$. 
\end{itemize}

\begin{lemma} \label{lem:proper}
The map $\pi:\wt{\cM}_n \to \cM$ is finite.
\end{lemma}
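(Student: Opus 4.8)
The plan is to show the map $\pi\colon \wt{\cM}_n \to \cM$ is finite by checking it is proper (using valuative criteria or, more simply, that it is the pullback of a proper morphism) together with quasi-finite, and since everything is of finite type over the noetherian base $S$, properness plus quasi-finiteness yields finiteness. Recall from Proposition~\ref{prop:mapping} that $\cM$ is of finite type over $S$, and over $\cM$ we have the universal map $\phi\colon (\cU)_{\cM} \to Y_{\cM}$, hence the pulled-back $\cO_E$-sheaf $\phi^*(\cT'_n)$ on $(\cU)_{\cM}$. The space $\wt{\cM}_n$ represents the functor of isomorphisms $\phi^*(\cT'_n) \to (\cL_n)_{\cM}$ between two lisse sheaves of free $\cO_E/\ell^n$-modules of rank $2$ on $(\cU)_{\cM}$; equivalently, it is a locally closed subscheme of the $\cO_E/\ell^n$-module scheme $\ul{\Hom}\big(\phi^*(\cT'_n), (\cL_n)_{\cM}\big)$ cut out by the conditions of being an isomorphism and commuting with the $\pi_1$-action. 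Phrased this way, $\wt{\cM}_n \to \cM$ is an instance of the sheaf-of-isomorphisms functor $\uIsom$ between two locally constant constructible sheaves of finite abelian groups on a (relative) curve.

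First I would reduce to the fiberwise statement: over a geometric point $\ol m \to \cM$ lying over a geometric point $\ol s\to S$, the fiber $\wt{\cM}_{n,\ol m}$ is the set of $\pi_1^{\et}((\cU)_{\ol s})$-equivariant isomorphisms between two finite étale covers of $(\cU)_{\ol s}$ of the same (bounded) degree $(\#(\cO_E/\ell^n))^{?}$; this is a finite set, giving quasi-finiteness. For properness, the cleanest route is the valuative criterion: given a discrete valuation ring $R$ with fraction field $\Frac(R)$, a point $\Spec R \to \cM$ and an isomorphism $\psi_\eta$ of the two $\cO_E/\ell^n$-sheaves over the generic point, one must extend $\psi_\eta$ over $\Spec R$. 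Since both $\phi^*(\cT'_n)$ and $(\cL_n)_{\cM}$ are lisse (locally constant) on the smooth relative curve $(\cU)_{\cM}$, their restrictions to $(\cU)_R$ are lisse, and an isomorphism of lisse sheaves over the generic fiber of a normal connected scheme extends uniquely — here one uses that $(\cU)_R$ is connected (the special fiber is nonempty and the curve is geometrically connected, so by properness of $\cC$ and normality the total space is connected) and that $\Hom$ between lisse sheaves on a connected normal scheme is detected on any dense open, in particular on the generic fiber. Concretely, a homomorphism of locally constant sheaves on $(\cU)_R$ is a section of a finite étale scheme over $(\cU)_R$, and such a section is determined by, and can be spread out from, its restriction to the generic fiber because $(\cU)_R$ is normal and connected; the property of being an isomorphism is then automatically inherited since it holds at the generic point and the determinant/inverse are again given by sections of finite étale schemes.

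Alternatively — and this may be the slicker writeup — I would exhibit $\wt{\cM}_n$ directly as a scheme finite over $\cM$: set $V = \ul{\Hom}_{\cO_E/\ell^n}\big(\phi^*\cT'_n,\ (\cL_n)_{\cM}\big)$, which is a finite free $\cM$-scheme (a vector bundle over $\cO_E/\ell^n$ of bounded rank, hence finite over $\cM$ since $\cO_E/\ell^n$ is finite), then $\wt{\cM}_n$ is the closed-and-open subscheme of $V$ where the homomorphism is compatible with the $\pi_1$-action and is an isomorphism. Being compatible with the action is a closed condition (equality of two morphisms to a separated scheme, imposed over the generic fiber and propagated by normality), and being an isomorphism is an open condition; a locally closed subscheme of a finite $\cM$-scheme that is also closed — which it is, by the valuative argument above showing no points are lost in limits — is finite over $\cM$. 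The main obstacle is the bookkeeping that $\wt{\cM}_n$ is genuinely closed in $V$, i.e. the properness input: one must rule out an isomorphism degenerating to a non-isomorphism, and this is exactly where the lisse (unramified, locally constant) nature of both sheaves on the smooth relative curve is essential — over a non-lisse locus the rank could jump. Everything else (finite type, quasi-finite) is formal from Proposition~\ref{prop:mapping} and the finiteness of $\cO_E/\ell^n$.
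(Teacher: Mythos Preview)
Your proposal is correct and follows essentially the same line as the paper: prove quasi-finiteness by noting that over any field-valued point there are only finitely many isomorphisms $\psi$, then prove properness via the valuative criterion by extending an isomorphism of lisse (finite \'etale) sheaves from the generic fiber over a DVR using normality of $(\cU)_R$. The paper's writeup is terser---it just says ``finite \'etale sheaves on $U_R$, which is a normal scheme, $\psi$ extends uniquely''---and does not pursue your alternative route through $\ul{\Hom}$, but the substance is the same.
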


\begin{proof}
Note that for every field-valued point $f$ of $\cM$ there are only finitely many choices for $\psi$ and so the map $\wt{\cM}_n\to\cM$ is quasi-finite. Thus, to prove the lemma it is sufficent to show that $\pi$ is proper.

We use the valuative criterion. Let $R$ be a DVR with fraction field $F$. Let $f\in\cM(R)$ and 
$(\psi,f)\in\wt{\cM}_n(F)$. Thus, $f$ corresponds to a map $f:U_R\to Y_R$ and 
$\psi: f^*(\cT_n)_F\to(\cL_n)_F$ is an isomorphism. Since $f^*(\cT_n)$ and $\cL_n$ are finite etale sheaves on $U_R$, which is a normal scheme, $\psi$ extends uniquely over $U_R$. 
\end{proof}

Let $\cM_n$ be the image of $\wt{\cM}_n$ in $\cM$, which is closed by Lemma~\ref{lem:proper}. We endow it with the reduced subscheme structure. As the $\cM_n$ form a descending chain of closed subschemes of $\cM$, they stabilize. Let $\cM_{\infty}$ be $\cM_n$ for $n \gg 0$.

\begin{lemma}
The fiber of $\cM_{\infty}$ over all closed points of S is non-empty.
\end{lemma}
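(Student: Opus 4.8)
The plan is to apply Proposition~\ref{drinfeld} over each finite field $\bF_v$ and then check that the resulting elliptic curve produces a point of $\cM_\infty$.

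First I would fix a closed point $v$ of $S$ and consider the base change $\cC_{\bF_v}$ together with the sheaf $\cL_{\bF_v}$ restricted to $\cU_{\bF_v}$. By hypothesis $L$ is irreducible over $U_K$, and by Lemma~\ref{irredsubgp} it remains irreducible on restriction to open subgroups; combined with Proposition~\ref{chebo} and Corollary~\ref{cheborep}, irreducibility of the sheaf over the geometric generic fiber forces irreducibility of $\cL_{\bF_v}$ over $\cU_{\bF_v}$ for all but finitely many $v$ (for the excluded finite set we simply note the lemma only claims non-emptiness over \emph{closed} points and we may need to argue these cases separately, or shrink $S$ — but in fact irreducibility spreads out, so after enlarging $N$ we may assume it holds everywhere). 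Hypotheses (a), (c), (d) on $L$ are inherited by $\cL_{\bF_v}$: the cyclotomic determinant condition (a) restricts directly, rationality of Frobenius traces (c) is a condition on closed points of $\cU$ which include those over $\bF_v$, and for (d) one needs that the point of bad (not potentially good) reduction specializes to a point of $C_{\ol{\bF_v}}$ where $\cL_{\bF_v}$ still fails to have potentially good reduction — this holds because the $j$-invariant has a genuine pole, equivalently the local monodromy is unipotent infinite, a property preserved under specialization for all but finitely many $v$. Thus Proposition~\ref{drinfeld} applies and yields an elliptic curve $f_v \colon E_v \to \cU_{\bF_v}$ with $\rR^1 f_{v,*}(\ol{\bQ}_\ell) \cong \cL_{\bF_v}$.

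Next I would convert $E_v$ into an $\bF_v$-point of $\cM_\infty$. Since $\cL/\ell^r$ is trivial, the level-$\ell^r$ structure on $E_v[\ell^r]$ furnished by a trivialization gives a morphism $\cU_{\bF_v} \to Y = Y(\ell^r)$ classifying $(E_v, \text{level structure})$; this morphism has some degree $d$, and because $X(\ell^r)$ has genus $\ge 2$ while $Y$ has genus $g(X)$, the Riemann–Hurwitz / Castelnuovo-type bound forces $d \le D$ with $D$ as defined (this is exactly why $D$ was chosen as $(g(C)-1)/(g(X)-1)$ rounded up, noting a nonconstant map of curves $C \to X$ satisfies $g(C)-1 \ge d(g(X)-1)$). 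Hence this map is a point of $\cM(\bF_v)$, and compatibility of the Tate module of $E_v$ with $\cL$ means that $f^*(\cT_n) \cong \cL_n$ for all $n$, so it lifts to a point of $\wt{\cM}_n(\bF_v)$ for every $n$, hence lands in $\cM_n(\bF_v)$ for all $n$, hence in $\cM_\infty(\bF_v)$.

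The main obstacle I expect is not any single step but the bookkeeping of which hypotheses survive passage to $\cC_{\bF_v}$ — in particular verifying that the ``not potentially good reduction'' condition (d) specializes correctly, and handling the finitely many bad $v$. For (d), the cleanest argument is to note that after our earlier reductions the divisor $C \setminus U$ spreads out to a divisor smooth over $S$, and at the chosen point the local monodromy of $\cL$ is infinite unipotent; the image of a topological generator of tame inertia is a fixed nontrivial unipotent matrix in $\GL_2(\cO_E)$, and reduction mod $v$ kills this only when $\mathrm{char}(\bF_v) \mid$ (something bounded), so for $v$ outside a finite set the specialized inertia still acts through an infinite unipotent, giving (d) for $\cL_{\bF_v}$. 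Since the claim to be proved only asserts non-emptiness of fibers over closed points, and a closed point of $S$ of residue characteristic outside our finite bad set suffices for each such fiber — but we actually need \emph{every} closed point — I would instead enlarge $N$ at the outset so that $S$ avoids all problematic primes, which is legitimate since all the conditions in play (irreducibility, smoothness of the boundary divisor, infiniteness of the specialized local monodromy) are open conditions on $\Spec \cO_K$. With that done, the argument above goes through for every $v$, completing the proof.
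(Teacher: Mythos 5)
Your overall strategy matches the paper's: apply Proposition~\ref{drinfeld} fiberwise over each closed point $s$ of $S$, then bound the degree of the resulting classifying map $\cU_s \to Y(\ell^r)$ to land in $\cM$, then observe the resulting point lies in every $\cM_n$ and hence in $\cM_\infty$. However, there is a genuine gap in your degree bound. You assert that the map $f \colon \cU_s \to Y$ classifying $(E_v, \text{level structure})$ has degree $\le D$ by Riemann--Hurwitz. But $s$ has positive residue characteristic $p$, and the bound $g(C)-1 \ge d(g(X)-1)$ only holds for \emph{separable} maps; for purely inseparable maps (such as powers of Frobenius) it fails badly, since Frobenius preserves genus while having arbitrarily large degree. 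Nothing in the construction forces the classifying map $f$ to be separable. The paper handles this by factoring $f = g \circ F^n$ with $g$ separable and $F$ absolute Frobenius on $Y_{\kappa(s)}$, and then observing $F^*(\cT) \cong \cT$ (Frobenius pullback of an \'etale sheaf is canonically the sheaf itself), so $T(\cE) \cong g^*(\cT)$ and the \emph{separable} map $g$, whose degree \emph{is} bounded by $D$, produces the desired point of $\cM_n$. Without this step you cannot conclude that your map lies in $\cM = \coprod_{d \le D} \cM_d$.

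On the irreducibility of $\cL_s$: you wave at ``irreducibility spreads out'' and propose to enlarge $N$, but since the lemma must cover every closed point of $S$, this requires revisiting the entire setup. The paper avoids this by a sharper argument: since the earlier reductions ensure the image of $\rho$ has order $M\ell^\infty$ with $M\ell \mid N$, and $p \nmid N$, the restriction of $\rho$ to $\pi_1^{\et}(\cU_{\ol{S}})$ factors through the prime-to-$p$ quotient; the specialization map $\pi_1^{\et,(p)}(\cU_{\ol{s}}) \to \pi_1^{\et,(p)}(\cU_{\ol{S}})$ is an isomorphism (SGA1), so irreducibility of $\cL \vert_{U_{\ol K}}$ transfers directly to irreducibility of $\cL_s$ without excluding any $v \in S$. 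You should incorporate this cleaner argument, or at least make precise what ``spreads out'' means and verify it is consistent with the rest of the proof of Theorem~\ref{mainthm}.
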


\begin{proof}
Let $s$ be a closed point of $S$ of characteristic $p$. By Lemma~\ref{stupid1}, $\cL_{\ol{K}}$ is an irreducible sheaf. Let
$\ol{S}$ be the strict Hensilization of $S$ at $s$,  $\ol{s}$ the geometric point corresponding to $s$, and $K_s$ the fraction field of $\ol{S}$. By \cite[XIII,2.10,pg.~289]{sga},
\begin{displaymath}
\pi_1^{\et,(p)}(\cU_{\ol{s}})\cong \pi_1^{\et,(p)}(\cU_{\ol{S}})\twoheadleftarrow \pi_1^{\et}(U_{K_s})\supset \pi_1^{\et}(U_{\ol{K_s}})=\pi_1^{\et}(U_{\ol{K}}) \leqno{(*)}
\end{displaymath}
where $\pi_1^{\et, (p)}$ denotes the prime to $p$ quotient of $\pi_1^{\et}$. We can regard $\cL$ as a representation of $\pi_1^{\et}(\cU)$ and then restrict it to a representation of $\pi_1^{\et}(\cU_{\ol{S}})$; since the image of the representation has order $M \ell^{\infty}$, which is prime to $p$, it factors through $\pi_1^{\et,(p)}(\cU_{\ol{S}})$. We thus obtain a representation of $\pi_1^{\et,(p)}(\cU_{\ol{s}})$. The pullback of this representation to $\pi_1^{\et}(U_{\ol{K}})$ is irreducible. It follows now that $\cL_s$ is an irreducible sheaf on $\cU_s$.

By Proposition~\ref{drinfeld}, we can find a family of elliptic curves $\cE \to \cU_s$ and an isomorphism $\cL_s \vert_{\cU_s} \cong T(\cE) \otimes \cO_E$, where $T(\cE)$ is the relative Tate module of $\cE$. It follows that $T(\cE)/\ell^r$ is the trivial sheaf, and so we can find a basis for $\cE[\ell^r]$ over $\cU_s$. We thus have a map $f \colon \cU_s \to Y$ such that $T(\cE) \cong f^*(\cT)$. Factor $f$ as $g \circ F^n$, where $g \colon \cU_s \to Y$ is separable and $F \colon Y_{\kappa(s)} \to Y_{\kappa(s)}$ is the absolute Frobenius. Note that $F^*(\cT) \cong \cT$, and so $T(\cE) \cong g^*(\cT)$. Let $\ol{g}$ be the extension of $g$ to a map $C_s \to X$. Since $\ol{g}$ is separable, it has degree $\le D$. Thus $\ol{g}$, and the isomorphism $\cL \cong g^*(\cT) \otimes \cO_E$, define a $\kappa(s)$ points of $\cM_n$ for all $n$, which proves the lemma.
\end{proof}

\begin{proof}[Proof of Theorem~\ref{mainthm}]
Since $\cM_{\infty}$ is finite type over $S$ and all of its fibers over closed points are non-empty, it follows that the generic fiber of $\cM_{\infty}$ is non-empty. Choose a point $x$ in $\cM_{\infty}(L')$, for some finite extension $L'/L$, corresponding to a family of elliptic curves $\cE \to U_{L'}$.  Now, $x$ lifts to $\wt{\cM}_n(\ol{L})$ for all $n$. Thus $\cL_n$ and $T(\cE) \otimes \cO_E/\ell^n$ are isomorphic for all $n$, as sheaves on $U_{\ol{L}}$. It follows that $\cL$ and $T(\cE) \otimes \cO_E$ are isomorphic over $U_{\ol{L}}$ (by compactness). By Lemma~\ref{lemma:rep}, $\cL[1/\ell]$ and $T(\cE) \otimes E$ are isomorphic over $U_{L''}$, for some finite (even quadratic) extension $L''$ of $L'$. Passing to the generic fibers, we see that $\rho \vert_{KL''}$ comes from an elliptic curve, which completes the proof by Proposition~\ref{extn}.
\end{proof}

\end{document}